\documentclass[a4paper]{amsart}
\usepackage{hyperref}   
\usepackage{amssymb,xcolor}
\usepackage{stmaryrd} 
\usepackage{enumerate}

\newcommand{\unfoldedcomment}[7]{}
\newcommand{\foldedcomment}[7]{}

\usepackage{mathtools}      
\mathtoolsset{showonlyrefs}

\newtheorem{theorem}{Theorem}[section]
\newtheorem{definition}[theorem]{Definition}
\newtheorem{lemma}[theorem]{Lemma}
\newtheorem{remark}[theorem]{Remark}
\newtheorem{proposition}[theorem]{Proposition}
\numberwithin{equation}{section}
\numberwithin{figure}{section}
\begin{document}

\title[Spectral PMT for asymptotically hyperbolic 3-manifolds]{Spectral positive mass theorem for asymptotically hyperbolic 3-manifolds with
toroidal infinity}


\author{Xiaoxiang Chai}
\address{School of Mathematics and Statistics,
  Key Laboratory of Nonlinear Analysis \& Applications (Ministry of Education), Central China Normal
University, Wuhan 430079, China}
\email{xxchai@kias.re.kr}

\author{Yimin Chen}
\address{School of Mathematics and Statistics, Hainan University, Haikou, China}
\email{yiminchen@hainanu.edu.cn}

\author{Juncheol Pyo}
\address{Department of Mathematics and Institute of Mathematical Science, Pusan National University, Busan, 46241, South Korea \& Korea and School of Mathematics, Korea Institute for Advanced Study, Seoul, 02455, South Korea}
\email{jcpyo@pusan.ac.kr}

\keywords{Spectral positive mass theorem, spectral scalar curvature, spectral mean curvature, spacetime harmonic function, asymptotically hyperbolic manifolds.}

\subjclass{53C24, 53C21, 35J66, 83C99.}

\begin{abstract}
We define a mass invariant adapted to the spectral scalar curvature for asymptotically hyperbolic 3-manifolds with toroidal infinity and show its positivity under a lower bound on the spectral scalar curvature. In addition, we show a rigidity theorem and some band width estimates under similar assumptions.
\end{abstract}

\maketitle

\section{Introduction}

A Riemannian manifold $M$ is called \textit{asymptotically flat} if there is a compact set $K\subset M$, such that $M\setminus K$ is diffeomorphic to $\Bbb R^n\setminus B_{1}(0)$ and in the standard coordinates in $\Bbb R^n$ and the metric is close to the flat metric near the infinity. The positive mass theorem (see \cite{schoen1979proof,schoen1981proof,witten1981new}) establishes the non-negativity of a geometric quantity called the ADM mass under the assumption of nonnegative scalar curvature. The case of vanishing mass characterizes the Euclidean space.
%
%
%
%
%

One can also study a manifold close to the hyperbolic space near the infinity, and such a manifold is called an \textit{asymptotically hyperbolic manifold}. See \cite{wang2001mass,chrusciel2003mass,andersson-rigidity-2008} and also an earlier work \cite{minoo-scalar-1989}. 
Instead, a negative scalar curvature lower bound given by the model hyperbolic space is assumed.
Recently, we have seen great progress in the positive mass theorems, in particular the resolution of the higher dimensional positive mass theorem, see \cite{bi2026proof}, \cite{hirsch2026hyperboloidalspacetimepositivemass}, \cite{brendle2026dimensiondescentschemepositive}.


Another model metric of interest in the field of the positive mass theorem is the \( n \)-dimensional
hyperbolic cusp 
\begin{equation} \mathrm{d} t^2 + e^{2t} g_{\mathbb{T}^{n-1}} \label{horocyclic model}
\end{equation}
which by a coordinate change \( r = e^t \) is isometric to
\begin{equation} r^{-2}\mathrm{d} r^2 + r^2 g_{\mathbb{T}^{n-1}}. \label{conformal form}
\end{equation}
In this paper, we are interested in asymptotically hyperbolic 3-manifolds with infinity given by \eqref{horocyclic model} (i.e., \( t, r \to \infty \)) and we give the following definition using the metric \eqref{conformal form}.
\begin{definition}\label{ah non}
  A manifold $(M, g)$ is called asymptotically hyperbolic with toroidal infinity if there is a
  compact set $K$ whose complement is diffeomorphic to a cylinder with torus
  cross-section and in the coordinates given by the diffeomorphism $\psi : (1,
  \infty)\times \mathbb T^2 \to M\backslash K$ \ the metric satisfies
  \begin{equation}
    \psi^{\ast} g = r^{- 2} \mathrm{d} r^2 + r^2 (g_{\mathbb{T}^2} +
  r^{- \kappa} m) + Q_g \label{asymptotics g}
  \end{equation}
  where $\kappa >0$, $r \in (1, \infty)$ is the radial coordinate, $m$ is a symmetric
  2-tensor on $\mathbb{T}^2$ and $Q_g$ satisfies the asymptotics
  \[ |Q_g |  + r | \nabla Q_g | + r^2 | \nabla^2 Q_g | = o (r^{-\kappa}) . \]
\end{definition}

It is worth mentioning that
such manifolds belong to a more general class of ALH manifolds (see \cite{chrusciel-mass-2018}, \cite{huang-scalar-2022}) and the toroidal infinity can also be called a cuspidal end.

Now we recall the mass for such manifolds.

\begin{definition}[\cite{chrusciel-trautman-bondi-2004, chrusciel2003mass}]
  Let $\kappa=1$. If $r (R_g + 6) \in L^1 (M)$, then
  \[ E = \tfrac{1}{|\mathbb{T}^2 |} \int_{\mathbb{T}^2}
     3\ensuremath{\operatorname{tr}}_{g_{\mathbb{T}^2}} m \mathrm{d}
     A_{g_{\mathbb{T}^2}} \]
  is well-defined and called the total
energy, and
  $\operatorname{tr}_{g_{\mathbb{T}^2}} m$ is called the
  {mass aspect function}.
\end{definition}

The positive mass theorem for this mass was shown in
\cite{chrusciel-mass-2018}, see also \cite{alaee-positive-2022} for the
3-dimensional spacetime setting. For the rigidity statement, see \cite{huang-scalar-2022}.
%

Our main goal is to pursue a spectral positive mass theorem of an asymptotically hyperbolic $3$-manifold with toroidal infinity. Such a positive mass theorem deals with the spectral scalar curvature, the spectral mean curvature and requires the notion of a spectral mass. First, we give the following spectral analog of the scalar curvature
and the mean curvature.

\begin{definition}
Given a Riemannian manifold \( (M,g) \) (possibly with boundary), a positive number \( \gamma>0 \)
and a positive function $u\in C^2(\bar {M})$, the quantity
\begin{equation} 
  -\gamma u^{-1}\Delta_g u + \tfrac{1}{2} R_g \label{scc def}
\end{equation}
is called the \( (\gamma,u)\)-spectral scalar curvature. For a hypersurface $\Sigma$ in \(  M \), \[ \gamma u^{-1} u_{\nu} + H \] is called the \( (\gamma,u) \)-spectral mean curvature of $\Sigma$. We often omit the references to \( (\gamma,u) \) for convenience.
We adopt the convention that the normal of a boundary component of a manifold point to the outside of the manifold; let \( \Sigma \) be a boundary component of \( M \) and \( \nu \) be its normal, and
  the mean curvature of $\Sigma$ is given by $H=\operatorname{div}_\Sigma\nu$.
  \end{definition}

  There are two variations
  \begin{align}
    S_{c,\gamma} = & -\gamma u^{-1}\Delta_g u + \tfrac{1}{2} R_g + c \gamma u^{-2} |\nabla u|^2  \label{grad pert} ,\\
    P_{ \alpha, \gamma} = & R_g  - 2 \gamma \Delta_g f + \alpha |\nabla f | ^2 \label {p form sc}
    \end{align}
    of the spectral scalar curvature \eqref{scc def}. For the definition of \( P_{\alpha,\gamma} \), see for instance \cite{deng-curvature-dimension-2021}.
    Evidently, letting \( f = \log u \) in \eqref{p form sc} gives the form \eqref{grad pert} with suitable values of \( c \). A notable example of \eqref{p form sc} is the Perelman's weighted scalar curvature
    \begin{equation}
      P=R_g + 2 \Delta_g f -|\nabla f |^2 . \label{perelman sc}
      \end{equation}
    The form \eqref{grad pert} is actually equivalent to \eqref{scc def} by observing that
    \[  
     - u^{-1}\Delta_g u  + c u^{-2} |\nabla u|^2  = - \tfrac{1}{1-c} u^{-(1-c)} \Delta_g u^{1-c} .
   \]
   In particular, setting \( f = - 2 \log u \) gives \( P = 2 (-2u^{-1} \Delta_g u + \tfrac{1}{2} R_g) \).
   





Now we introduce the following spectral mass.

\begin{definition}\label{m g u}
 We fix $0 < \gamma < 2$ and $\kappa = \tfrac{6 - 2\gamma}{2 - \gamma}$. 
 Let $(M,g)$ be given in Definition \ref{ah non}
and a function $u\in C^{2}(\bar{M})$ which satisfies
  \begin{equation} u = r^{\tfrac{1}{2 - \gamma}} (1 + r^{- \kappa} \zeta + o
     (r^{- \kappa})). \label{u asymptotics} \end{equation}
If \( u^2 (- \gamma u^{-1}\Delta_gu+\tfrac{1}{2}R_g) \in L^1 (M) \),
  the quantity
  \[ E:= E (M,g,\gamma,u) = \tfrac{4\kappa}{2-\gamma} \int_{T^2} (\tfrac{1}{2} \ensuremath{\operatorname{tr}}_{g_{\mathbb{T}^2}}m + \gamma
    \zeta) \sqrt{g_{\mathbb{T}^2}} dA\]
  is called the spectral mass (or energy)
  and the quantity \(\tfrac{1}{2}\ensuremath{\operatorname{tr}}_{g_{\mathbb{T}^2}}m + \gamma
    \zeta \) on $T^2$ is called the spectral mass aspect function.
\end{definition}

Our definition is related to the Perelman's weighted scalar curvatrue \eqref{perelman sc} and the weighted mass in the asymptotically flat setting (see \cite{baldauf-spinors-2022}, cf. \cite{brendle2026dimensiondescentschemepositive}). We can also define the spectral mass with more general asymptotics than \eqref{asymptotics g}, see Chai-Liu-Sun (\textit{work in prepration}).

Our main result is given in the following which is a positive mass theorem for \( E \) assuming suitable bounds for the spectral scalar curvature and the spectral mean curvature of the boundary.

 \begin{theorem} \label{spec pmt}
   Let $(M,g)$ be asymptotically hyperbolic with toroidal infinity as in Definition \ref{ah non} with $K$ being empty, and 
 \( u \) be given by Definition \ref{m g u}.
   If the spectral scalar curvature satisfies
   \[ -\gamma u^{-1}\Delta_g  u + \tfrac{1}{2} R_{g} \geq    - \tfrac{(3-\gamma)(4-\gamma)}{(2-\gamma)^{2}} \]
   and the spectral mean curvature of $\partial M$ satisfies
   \[ \gamma u^{-1}u_{\nu} + H_{\partial M} \geq    -\tfrac{4-\gamma}{2-\gamma},\]
    then the spectral mass \( E(M,
  g,\gamma, u)\) is non-negative. The spectral mass vanishes if and only if $(M, g)$ is
  isometric to
  \[ ([r_0, \infty) \times \mathbb{T}^2, r^{- 2} \mathrm{d} r^2 + r^2
     g_{\mathbb{T}^2}) \]
  for some $r_0>0$ and $u$ is equal to
  $r^{\tfrac{1}{2 - \gamma}}$.
\end{theorem}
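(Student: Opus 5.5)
The plan is to use a weighted level-set (Stern-type) argument adapted to the spectral scalar curvature, with the equality case of Theorem \ref{spec pmt} serving as a guide. First I check the model $g_0=r^{-2}\mathrm{d}r^2+r^2g_{\mathbb T^2}$, $u_0=r^{a}$ with $a=\tfrac1{2-\gamma}$. In the variable $t=\log r$ one has $\Delta u_0=(a^2+2a)u_0$ and $R=-6$. Hence
\[-\gamma u_0^{-1}\Delta u_0+\tfrac12R=-\gamma a(a+2)-3=-\tfrac{(3-\gamma)(4-\gamma)}{(2-\gamma)^2}.\]
On the slice $\{r=r_0\}$ with outward normal $-r\partial_r$, we have $H=-2$ and $\gamma u_0^{-1}\partial_\nu u_0=-\gamma a$. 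Their sum is $-\tfrac{4-\gamma}{2-\gamma}$. So both hypotheses are sharp on the model, and the constants in the theorem are exactly those of this equality case.

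\textbf{Step 1: an auxiliary function.} I would solve a weighted elliptic boundary value problem on $M$ for a function $w$. The equation is of the form $\operatorname{div}(u^{\beta}\nabla w)=0$, with a Neumann-type (or Robin-type) condition on $\partial M$ matched to the spectral mean curvature. The exponent $\beta$ and the asymptotics $w\sim r^{b}$ are chosen so that on the model, $w=r^{b}$ has vanishing Hessian defect. Existence would follow from exhaustion by compact domains $[1,R_i]\times\mathbb T^2$, barriers built from $r^{b}(1\pm Cr^{-\kappa})$, and the asymptotics \eqref{asymptotics g}. One then expands $w=r^{b}(1+r^{-\kappa}\xi+o(r^{-\kappa}))$, where $\xi$ solves a linear ODE/PDE on $\mathbb T^2$ whose source involves $\operatorname{tr}m$ and $\zeta$.

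\textbf{Step 2: the integral identity.} On regular level sets $\Sigma_s=\{w=s\}$, I would combine three ingredients:
\begin{itemize}
\item the Bochner formula for $w$;
\item the traced Gauss equation $R_g=2K_\Sigma+|\nabla w|^{-2}(\dots)$;
\item the spectral term $-\gamma u^{-1}\Delta u$, absorbed by integrating by parts along $\nabla w$ and applying Cauchy--Schwarz to the cross term $u^{-1}\langle\nabla u,\nabla w\rangle$.
\end{itemize}
The constraint $\gamma<2$ should be exactly what makes this absorption possible. Integrating with the weight $u^{\beta}$ and using the coarea formula, I expect an inequality of the form
\[\int_M u^{\beta}\Big(\tfrac{|\nabla^2w-\cdots|^2}{|\nabla w|}+\big(S+\tfrac{(3-\gamma)(4-\gamma)}{(2-\gamma)^2}\big)|\nabla w|\Big)\le \lim_{r\to\infty}\mathcal B_r-\mathcal B_{\partial M}-4\pi\int\chi(\Sigma_s)\,\mathrm ds,\]
where $S$ denotes the spectral scalar curvature. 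The boundary term $\mathcal B_{\partial M}$ is nonnegative by the spectral mean curvature hypothesis, and the flux at infinity, computed from the expansions of $g,u,w$, equals $-cE$ for a positive constant $c$. The integrability assumption $u^2(-\gamma u^{-1}\Delta u+\tfrac12R)\in L^1$ makes the limit finite; the powers of $r$ are tuned by $\kappa=\tfrac{6-2\gamma}{2-\gamma}$ so that the $O(r^{-\kappa})$ corrections give exactly $E$ and the leading terms cancel.

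\textbf{Main obstacle: the Euler characteristic term.} I need $\chi(\Sigma_s)\le 0$ for almost every $s$. Since $K=\emptyset$, $M\cong[1,\infty)\times\mathbb T^2$. Each regular level set separates $\partial M$ from infinity, since $w$ is prescribed in the Neumann sense at $\partial M$ and tends to $\infty$ at the end, so its homology class is that of $\mathbb T^2$. However, sphere components may still occur. I would first try a maximum-principle argument: because $\partial_\nu w$ is controlled on $\partial M$, $w$ has no interior local extrema. A null-homologous sphere component would then bound a region on which $w$ is constant on the boundary, which is impossible. The remaining components are incompressible tori, and it suffices to show that they sum to $\chi\le0$. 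If this fails, the fallback is to replace level sets by a $\mu$-bubble foliation with prescribed spectral mean curvature. Critical values are handled by Sard's theorem together with the standard Stern regularization $\sqrt{|\nabla w|^2+\epsilon}$.

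\textbf{Step 3: rigidity.} If $E=0$, every nonnegative term vanishes. This gives the following:
\begin{itemize}
\item $\nabla^2w$ equals its model form and $|\nabla w|>0$;
\item the spectral scalar curvature and the spectral mean curvature attain their bounds;
\item $\nabla u$ is parallel to $\nabla w$, with equality in the Cauchy--Schwarz step;
\item $K_\Sigma\equiv0$.
\end{itemize}
From these one integrates the ODE along $\nabla w/|\nabla w|$. This shows that $g$ is the warped product $\mathrm{d}t^2+e^{2t}h$ with $h$ flat, and, after matching asymptotics, $h=g_{\mathbb T^2}$, $M=[r_0,\infty)\times\mathbb T^2$ and $u=r^{1/(2-\gamma)}$. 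The converse, that the model has $E=0$, is immediate since there $m=0$ and $\zeta=0$.
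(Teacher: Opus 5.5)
\textbf{The proposal has a genuine gap.} The proof has to happen in your Step 2, and there the inequality you display is asserted (``I expect'') rather than derived. Worse, your framework cannot produce it.

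For $\operatorname{div}(u^\beta\nabla w)=0$, the Bochner formula, the traced Gauss equation and integrations by parts of $u^{-1}\Delta u$ along $\nabla w$ generate only four kinds of term: $S|\nabla w|$, $K_\Sigma|\nabla w|$, divergences, and $|\nabla w|$ times expressions quadratic in $\nabla^2 w/|\nabla w|$ and $\nabla\log u$. The equation is invariant under $g\mapsto\lambda^2 g$, so none of these terms can be a fixed positive constant times $|\nabla w|$. The quadratic terms vanish at points where $\nabla^2 w=0$ and $\nabla u=0$, so nothing absorbs $\Lambda|\nabla w|$ with $\Lambda=-\frac{(3-\gamma)(4-\gamma)}{(2-\gamma)^2}<0$. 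The function $u$ cannot supply the missing positivity either: on the model, $-\gamma u^{-1}\Delta u=-\gamma a(a+2)$ is itself negative.

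The missing idea is the one the paper uses: take a spacetime harmonic function $\Delta v+3f|\nabla v|=0$ with the constant $f=-\frac{2(3-\gamma)}{3(2-\gamma)}$, which breaks scale invariance.
\begin{itemize}
\item Start from the Hirsch--Kazaras--Khuri inequality, add $\operatorname{div}(f\nabla v)$, and apply the Kato inequality for $\bar\nabla^2 v=\nabla^2 v+f|\nabla v|g$.
\item The $f$-terms then yield $\frac{9(4-\gamma)}{2(3-\gamma)}f^2|\nabla v|=-2\Lambda|\nabla v|$, which cancels the curvature bound pointwise.
\item The leftover $2\gamma\int|\nabla|\nabla v|^{1/2}|^2$ absorbs the spectral term through $\int_M|\nabla\varphi|^2\ge-\int_M\varphi^2u^{-1}\Delta u+\int_{\partial M}\varphi^2u^{-1}u_\nu$ with $\varphi=|\nabla v|^{1/2}$ (Proposition~\ref{spectral hkk f}).
\end{itemize}
No weight in the equation is needed. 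This absorption uses $\gamma<3$; the condition $\gamma<2$ only enters through the asymptotics of $u$.

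\textbf{Other steps also fail as written.}
\begin{itemize}
\item Integrating against $u^\beta\,dV$ turns the Gauss--Bonnet term into $\int_{\Sigma_s}u^\beta K_\Sigma\,dA$, which is not $2\pi\chi(\Sigma_s)$. Your displayed inequality pairs a weighted bulk term with an unweighted Euler characteristic.
\item A Neumann condition is incompatible with the growth you prescribe. By the divergence theorem, the weighted flux $\int_{T_r}u^\beta\partial_\nu w\,dA$ equals the flux through $\partial M$, which is zero, for every $r$. But $w\sim r^b$ with $b\neq0$ makes this flux nonzero for large $r$.
\item With Robin data, $\partial M$ is not a level set and level sets can meet $\partial M$. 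The boundary contribution is then not $|\nabla w|(H+\gamma u^{-1}u_\nu)$, so the spectral mean curvature hypothesis does not enter the way you claim.
\end{itemize}

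With Dirichlet data, as in the paper, these problems disappear. One solves on $M_\rho$ with $v=0$ on $\partial M$ and $v=\rho^{2/(2-\gamma)}$ on $T_\rho$.
\begin{itemize}
\item Since $\partial_\nu v=-|\nabla v|$ on $\partial M$, the inner boundary term is $-2\int_{\partial M}|\nabla v|\bigl(\frac{4-\gamma}{2-\gamma}+H+\gamma u^{-1}u_\nu\bigr)\le0$ by the hypothesis.
\item Regular level sets are closed surfaces. The maximum principle rules out sphere components, so $\chi\le0$.
\item The outer term converges to $E$ for three reasons. The barrier estimates give $|\nabla v_\rho|=\frac{2}{2-\gamma}\rho^{2/(2-\gamma)}+O(1)$. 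The spectral mean curvature of $T_\rho$ is $\frac{4-\gamma}{2-\gamma}-\kappa\bigl(\frac12\operatorname{tr}m+\gamma\zeta\bigr)\rho^{-\kappa}+o(\rho^{-\kappa})$. Finally, $\kappa=2+\frac{2}{2-\gamma}$ exactly balances the powers of $\rho$.
\end{itemize}
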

\begin{remark}
We can also allow additional boundaries as in \cite[Theorem 1.1]{alaee-positive-2022}.
It is easy to see this is a generalization of the non-spectral positive mass theorem (i.e., the case \( \gamma =0 \)).
\end{remark}

Our approach to Theorem \ref{spec pmt} is based on the spacetime harmonic functions introduced in \cite{hirsch-spacetime-2022} which are tied to three dimensions. It is an interesting question to extend Theorem \ref{spec pmt} to higher dimensions.

The application of spacetime harmonic functions also yields some other geometric results under similar assumptions on the spectral scalar curvature. Using similar techniques, we prove a spectral scalar curvature rigidity theorem for toroidal bands or cusps (Theorem \ref{spectral acg}) and some band width estimates (Theorem \ref{band width estimate}), which were obtained earlier by Chai-Sun \cite{chai-rigidity-2026} using the warped 
$\mu$-bubble method. For the band width estimates in the case of a positive spectral scalar curvature bound, see \cite{hirsch-spectral-2024}.
%
%
%
 For related scalar curvature rigidity, see \cite{minoo-scalar-1989}, {\cite{witten-connectedness-1999}}, {\cite{andersson-rigidity-2008}}.

The spectral scalar curvature rigidity is given as follows.

\begin{theorem}[{cf. \cite[Theorem 1.8]{chai-rigidity-2026}}]
  \label{spectral acg}Let \( M = [- 1, 1] \times \mathbb{T}^2 \) with a metric
  \( g \) and \( u \) be a positive function such that
  \begin{equation}
    \label{scc1} - \gamma u^{- 1} \Delta_g u + \tfrac{1}{2} R_g \geq    \Lambda
  \end{equation}
  where \( 0 \leq   \gamma < 3 \) and \( \Lambda \leq   0 \). Let \(
  \nu_{\pm} \) be the unit normal of the boundary \( \partial_{\pm} M =\{\pm
  1\} \times \mathbb{T}^2 \) pointing to the direction pointing to the outside of
  \(M\). Suppose
  \[ H_{\partial_- M} + \gamma u^{- 1} u_{\nu_-} \geq   -\sqrt{-
     \Lambda \tfrac{4 - \gamma}{3 - \gamma}}  \text{ and } H_{\partial_+ M} +
   \gamma u^{- 1} u_{\nu_+} \geq    \sqrt{- \Lambda \tfrac{4 - \gamma}{3 - \gamma}} 
   . \]
  Then \( (M, g) \) is isometric to
  \[ ([- 1, 1] \times \mathbb{T}^2, g = dt^2 + e^{2 \alpha t}
     g_{\mathbb{T}^2}), \text{ } \alpha = \tfrac{(2 - \gamma) \sqrt{-
     \Lambda}}{\sqrt{(3 - \gamma) (4 - \gamma)}}, \]
  and \( u \) is a constant multiple of \( e^{\beta t} \) where \( \beta =
  \sqrt{\tfrac{- \Lambda}{(3 - \gamma) (4 - \gamma)}} \).
\end{theorem}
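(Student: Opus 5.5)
The plan is to adapt the spacetime harmonic function method of \cite{hirsch-spacetime-2022} to the band \(M=[-1,1]\times\mathbb{T}^2\), with a weight built from \(u\) absorbing the spectral term. Set \(\lambda=\sqrt{-\Lambda\tfrac{4-\gamma}{3-\gamma}}\), the boundary constant in the hypotheses, and regard \(k=\tfrac{\lambda}{3}\,g\) (or a suitably rescaled multiple) as umbilic initial data. I would solve the boundary value problem
\[
\Delta_g\varphi+3a|\nabla\varphi|=0 \text{ in } M,\qquad \varphi=\mp1 \text{ on } \partial_\mp M,
\]
for a constant \(a\) to be fixed later. Existence and \(C^{2,\alpha}\) regularity away from critical points would follow from the standard regularization \(\Delta\varphi+3a\sqrt{|\nabla\varphi|^2+\epsilon}=0\) as in the cited work. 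By the maximum principle \(-1<\varphi<1\) in the interior, so the boundary components are the extreme level sets. Since the regular level sets \(\Sigma_t=\{\varphi=t\}\) separate \(\partial_-M\) from \(\partial_+M\), each component carries nonzero \(\mathbb{Z}\)-homology class in \(H_2(M)\). I expect that the Euler characteristic term in Gauss--Bonnet then contributes \(\le 0\), the torus case being the extremal one.

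Next I would derive the key integral inequality. The Bochner formula, combined with the traced Gauss equation on \(\Sigma_t\) as in \cite{hirsch-spacetime-2022}, expresses \(\int_{\Sigma_t}\tfrac12 R_g\) in terms of \(|\nabla^2\varphi|^2/|\nabla\varphi|^2\), \(\tfrac{d}{dt}\) of \(\int_{\Sigma_t}|\nabla\varphi|\), and \(2\pi\chi(\Sigma_t)\). To bring in \(u\), I would multiply the pointwise Bochner identity by \(u^{\gamma}\) (or \(\log u\)-weight \(\gamma\)) before integrating. On each level set I would decompose
\[
\Delta_g u=\Delta_{\Sigma_t}u+H u_\nu+\nabla^2u(\nu,\nu),
\]
and use \(-\int_{\Sigma_t}u^{-1}\Delta_{\Sigma_t}u=-\int_{\Sigma_t}u^{-2}|\nabla_{\Sigma_t}u|^2\le 0\) together with the sign from \(H u_\nu\) to convert the \(-\gamma u^{-1}\Delta u\) term into a \(t\)-derivative of \(\int_{\Sigma_t}\gamma u^{-1}u_\nu|\nabla\varphi|\) plus nonnegative squares. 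The restriction \(\gamma<3\) (rather than \(\gamma<4\)) should be exactly what makes the resulting quadratic form in \((|\nabla_{\Sigma}\log u|,\ \langle\nabla\log u,\nu\rangle,\ H)\) complete to a sum of squares. Integrating in \(t\) from \(-1\) to \(1\) via the coarea formula, with \(\chi\le0\) and the lower bound \eqref{scc1}, I aim for
\[
0\le \int_{\partial_+M}\bigl(\text{expression}\ \le\ \lambda- (H+\gamma u^{-1}u_{\nu_+})\bigr)|\nabla\varphi| + \int_{\partial_-M}\bigl(\ldots\bigr)|\nabla\varphi| - \int_M(\text{squares}),
\]
with \(a\) tuned, namely \(3a\) related to \(\lambda\) so that \(\Lambda\) cancels exactly against \(a^2\) terms. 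The boundary mean-curvature hypotheses then make the right side \(\le 0\).

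Hence every square vanishes. This gives \(\chi(\Sigma_t)=0\), \(|\nabla\varphi|\ne0\), and \(\nabla^2\varphi=-a|\nabla\varphi|(g-\nu\otimes\nu)\) up to the normal direction, so the level sets are umbilic with constant mean curvature. It also gives \(\nabla_{\Sigma}u=0\), equality in \eqref{scc1}, and equality in the boundary conditions. I would then reparametrize by arclength \(t\) along \(\nu\). This yields \(g=dt^2+e^{2\alpha t}h\) with \(h\) flat on \(\mathbb{T}^2\) (Gauss equation plus \(\chi=0\)), and \(u=u(t)\). Equality in \eqref{scc1} then becomes an ODE for \(u\) with constant coefficients. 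Together with the boundary equalities, this forces \(u=ce^{\beta t}\) with the stated \(\alpha,\beta\). One can verify that the choices are consistent: \(\alpha+\gamma\beta\) must equal \(\lambda/2\) and \(-\gamma(\beta^2+2\alpha\beta)-3\alpha^2=\Lambda\).

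The main obstacle is the second step: finding the right weighting by \(u\) and the right constant \(a\) so that all cross terms between \(\nabla\log u\), \(\nabla^2\varphi\) and \(H\) assemble into nonnegative squares with sharp constants matching \(\tfrac{4-\gamma}{3-\gamma}\). I would first try the weight \(u^{\gamma}\) and complete squares in the normal component \(\langle\nabla\log u,\nu\rangle\) against \(H\). If that fails, I would fall back to \(u^{s}\) with \(s\) free and optimize.
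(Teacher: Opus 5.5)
Your outline has the right architecture: a spacetime harmonic function with constant $f$ and constant boundary values, $\chi(\Sigma_t)\le 0$ on the toroidal band, boundary terms made nonpositive by the spectral mean curvature hypotheses, and rigidity from vanishing squares. The gap is the step that actually carries the theorem. You need an integral inequality in which $-\gamma u^{-1}\Delta_g u$ is absorbed with sharp constants, you leave it open, and neither route you suggest produces it.

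\emph{The $u^{\gamma}$ weight.} Multiplying the Bochner identity by $u^{\gamma}$ breaks the Gauss--Bonnet step. After the coarea formula you face $\int_{\Sigma_t}u^{\gamma}K\,dA$, which is not topological.

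\emph{The level-set decomposition.} Converting $u^{-1}\nabla^2u(\nu,\nu)$ into a $t$-derivative forces you to use $\nabla_\nu\nu=\nabla_{\Sigma}\log|\nabla v|$ and the variation of the measure $|\nabla v|\,dA$. What comes out is
\[
\int_{\Sigma_t}u^{-1}\Delta_g u\,dA=\frac{d}{dt}\int_{\Sigma_t}|\nabla v|\,\partial_\nu\log u\,dA+\int_{\Sigma_t}\left(|\nabla\log u|^2-\langle\nabla\log u,\nabla\log|\nabla v|\rangle\right)dA .
\]
Here the $Hu_\nu$ term has cancelled. The cross term involves the full gradient of $\log|\nabla v|$, including its tangential part, which is not among your variables $(\nabla_\Sigma\log u,\partial_\nu\log u,H)$. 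Completing the square leaves $-\tfrac14|\nabla\log|\nabla v||^2$, which after coarea is $-\int_M|\nabla|\nabla v|^{1/2}|^2$. This can only be paid for out of the Hessian term $|\bar\nabla^2v|^2/|\nabla v|$, and your plan never uses that term for this purpose.

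The missing idea is how the paper proves Proposition \ref{spectral hkk f}:
\begin{enumerate}[(i)]
\item Keep the unweighted inequality of Proposition \ref{hkk formula}.
\item Apply the Kato-type inequality $|\bar\nabla^2v|^2\ge\tfrac32|\nabla|\nabla v|+f\nabla v|^2$, so that $|\bar\nabla^2 v|^2/|\nabla v|$ contributes $6|\nabla|\nabla v|^{1/2}|^2$ plus $f$-terms.
\item Split off $2\gamma|\nabla\psi|^2$ with $\psi=|\nabla v|^{1/2}$. This leaves a square with coefficient $6-2\gamma$, and that is where $\gamma<3$ enters, not a quadratic form in $H$.
\item Use $\int_M|\nabla\psi|^2\ge-\int_M\psi^2u^{-1}\Delta_gu+\int_{\partial M}\psi^2u^{-1}u_\nu$, which is the coarea-integrated version of the identity above. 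It converts $R_g$ into $2(-\gamma u^{-1}\Delta_gu+\tfrac12R_g)$ and $H$ into the spectral mean curvature on $\partial M$.
\item Add $-\tfrac{\gamma}{3-\gamma}\int_M\operatorname{div}(f\nabla v)$ to rebalance the $f$-coefficients.
\end{enumerate}
With $f=-\tfrac{2(3-\gamma)}{3}\beta$ one gets $\tfrac{9(4-\gamma)}{2(3-\gamma)}f^2=-2\Lambda$ and boundary weight $\tfrac{3(4-\gamma)}{3-\gamma}f=-2\lambda$, so all terms cancel exactly. Note that this $f$ is negative, so your starting guess $k=\tfrac{\lambda}{3}g$ has the wrong sign. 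In the equality case, $\nabla\psi=\psi\nabla\log u$ gives $u=c|\nabla v|^{1/2}$, which is how $u$ is identified.

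Finally, your consistency check should read $2\alpha+\gamma\beta=\lambda$, since the level tori have $H=2\alpha$. It is not $\alpha+\gamma\beta=\lambda/2$.
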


\begin{remark}\label{same metric}
It should be observed that when \( 0< \gamma <2 \),
the metrics in Theorem \ref{spectral acg} and Theorem \ref{spec pmt} are in fact the same, the 3-dimensional hyperbolic cusp
\( \mathrm{d}t^2 + e^{2t} g_{\mathbb{T}^2} \) and \( u \) is \( e^{t / (2-\gamma)} \).
The difference only come from by a coordinate change and a scaling, see \eqref{horocyclic model} and \eqref{conformal form}. We also call Theorem \ref{spectral acg} a spectral cuspidal rigidity.

\end{remark}

Now we turn to the band width estimates, which is a research direction pioneered by Gromov \cite{gromov-metric-2018}. He established that if a toroidal band with positive scalar curvature has bounded width which is defined to be the distance
between two boundaries, see also \cite{cecchini-scalar-2024}, \cite{rade-scalar-2023}, \cite{hirsch-rigid-2025}.

To facilitate the description of the band width estimates, we introduce some
notations. Let $\Gamma$ and $\Lambda$ be two constants and we are concerned with ODE
\begin{equation}
  \Gamma \eta^2 + \eta' + \Lambda = 0 \label{eq general ode}
\end{equation}
such that the solution $\eta$ satisfies $\eta' < 0$. To ensure $\eta' < 0$, at
least one of $\Gamma$ and $\Lambda$ should be positive. Indeed the solution to
\eqref{eq general ode} is given by the following
\[ \eta (t) := \eta_{\Lambda, \Gamma} (t) := \left\{ \begin{array}{lc}
     \sqrt{- \Lambda / \Gamma} \coth \left( \sqrt{- \Lambda / \Gamma} t
     \right),     & \Gamma > 0, \Lambda < 0;\\
     \frac{1}{\Gamma t},     & \Gamma > 0, \Lambda = 0;\\
     \sqrt{\Lambda / \Gamma} \cot \left( \sqrt{\Lambda / \Gamma} t \right),    & \Gamma > 0, \Lambda > 0.
   \end{array} \right. \]
Evidently, $\eta_{\Lambda, \Gamma}$ is only well defined on the interval
$I_{\Lambda, \Gamma}$ given by
\[ I_{\Lambda, \Gamma} = \left\{ \begin{array}{cc}
     (0, \infty),   & \Gamma > 0, \Lambda \leq   0;\\
     \left( 0, \pi / \sqrt{\Lambda \Gamma} \right),   & \Gamma > 0, \Lambda
     > 0.
   \end{array} \right. \]

\begin{theorem}[{cf. \cite[Theorem 1.12]{chai-rigidity-2026}}]
  \label{band width estimate}
  Let $\Lambda$ be a constant, $0 \leq   \gamma < 3$ and $\Gamma =
  \frac{3 - \gamma}{4 - \gamma}$. Assume that at least one of $\Gamma$ and
  $\Lambda$ is positive. Let $M=[-1,1]\times\mathbb T^2$, and $t_- < t_+$ be two numbers such that
  \begin{enumerate}[(a)]
  \item there exists a positive function $u$ with $- \gamma u^{- 1} \Delta_g u
  + \frac{1}{2} R_g \geq    \Lambda$;
  \item $H_{\partial_+ M} + \gamma u^{- 1} u_{\nu_+} \geq    \eta (t_+)$ on
  $\partial_+ M$, $H_{\partial_- M} + \gamma u^{- 1} u_{\nu_-} \geq    - \eta
  (t_-)$ on $\partial_- M$,
  \end{enumerate}
then
\[ \ensuremath{\operatorname{width}} (M, g) \leq    t_+ - t_-. \]
The equality occurs if and only if $(M, g)$ is isometric to the model
\[ ([t_-, t_+] \times T^2, d t^2 + \phi (t)^2 g_{\mathbb{T}^2}) \]
where $\phi (t) = \exp \left( \frac{2 - \gamma}{4 - \gamma} \int^t \eta
\right)$ and $g_{\mathbb{T}^{n - 1}}$ is some flat metric on $\mathbb{T}^{n -
1}$ and $u$ is a constant multiple of $\exp \left( \frac{1}{4 - \gamma} \int^t
\eta \right)$.

\end{theorem}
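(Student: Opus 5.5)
We use the spacetime harmonic function technique of \cite{hirsch-spacetime-2022}, as for Theorem \ref{spec pmt} and Theorem \ref{spectral acg}. Suppose, for contradiction, that $\operatorname{width}(M,g)=:w>t_+-t_-$. Let $\rho$ be the distance to $\partial_-M$, and put $\tau=t_-+\epsilon+\lambda\rho$ with $\lambda<1$ chosen so that $\tau$ sweeps a slightly shrunken sub-interval of $(t_-,t_+)\subset I_{\Lambda,\Gamma}$; smooth $\rho$ as usual. The weight is $\eta(\tau)$. Solve the spacetime-type Dirichlet problem
\[
\Delta_g v + \gamma' u^{-1}\langle\nabla u,\nabla v\rangle + c\,\eta(\tau)|\nabla v| = 0,
\qquad v|_{\partial_\pm M}=\pm 1 .
\]
The constants $\gamma'$ and $c$ are fixed so that the integral identity below closes. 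Existence follows from the regularization in \cite{hirsch-spacetime-2022}.

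The main identity is a weighted Stern/Bray--Stern formula over level sets $\Sigma_s=\{v=s\}$. Multiply by a power $u^{a}$ and use Gauss--Codazzi on $\Sigma_s$. With the spectral term $-\gamma u^{-1}\Delta u$ and a Cauchy--Schwarz absorption of $u^{-2}|\nabla u|^2$ terms (this is where $\Gamma=\frac{3-\gamma}{4-\gamma}$ appears, as in the cusp rigidity), we aim for
\[
\int_{\partial M} u^{a}|\nabla v|\Big(H+\gamma u^{-1}u_\nu \mp \eta\Big)
\;\ge\;
\int_{-1}^{1}\!\int_{\Sigma_s} u^{a}\Big(\tfrac12\tfrac{|\nabla^2 v|^2}{|\nabla v|^2}
+ \big(S-\Lambda\big) + \big(\Gamma\eta^2+\eta'\lambda+\Lambda\big)\Big)
- 2\pi\!\int_{-1}^{1}\chi(\Sigma_s)\,ds .
\]
Here $S$ denotes the spectral scalar curvature. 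In this inequality:
\begin{itemize}
\item the $\eta'$ term comes from differentiating $\eta(\tau)$ along $\nabla v$, using $|\nabla\rho|\le 1$;
\item since $\lambda<1$ and $\eta'<0$, we get $\Gamma\eta^2+\lambda\eta'+\Lambda>\Gamma\eta^2+\eta'+\Lambda=0$, so the bulk term is strictly positive;
\item on $\mathbb T^2$ level sets the Euler characteristic terms are $\le 0$, since regular level sets separating the boundaries have components of genus $\ge 1$, via the homological argument in \cite{hirsch-spacetime-2022}.
\end{itemize}
The boundary terms are nonpositive after we shift so that $\eta(\tau)$ at $\partial_\pm M$ is at least $\eta(t_\pm)$ with the right signs, using the monotonicity of $\eta$ and the boundary hypotheses (b). This gives the contradiction $0>0$, hence $w\le t_+-t_-$.

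For rigidity in the case $w=t_+-t_-$, let $\lambda\to1$ and $\epsilon\to0$, and pass to a limit $v$. All inequalities become equalities. Then $\nabla^2 v$ is determined, $|\nabla\rho|=1$, each level set is a flat totally umbilic torus, and the Cauchy--Schwarz equality forces $\nabla u\parallel\nabla v$. Integrating the resulting ODE along $\rho=t-t_-$ yields the warped product $dt^2+\phi(t)^2g_{\mathbb T^2}$ with $\phi'/\phi=\frac{2-\gamma}{4-\gamma}\eta$ and $u'/u=\frac{1}{4-\gamma}\eta$, as in the proof of Theorem \ref{spectral acg}. The converse is a direct computation.

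The main obstacle is choosing the exponent $a$ and the coefficients $\gamma'$ and $c$ so that three things hold at once:
\begin{itemize}
\item the mixed terms $u^{-1}\langle\nabla u,\nabla v\rangle$ and $u^{-1}u_\nu$ recombine exactly into the spectral mean curvature on the boundary;
\item the leftover quadratic form in $(u^{-1}\nabla u,\ \eta\nabla v/|\nabla v|,\ \nabla^2 v)$ is nonnegative with optimal constant $\Gamma$;
\item $\gamma<3$ is exactly the range in which this form is nonnegative.
\end{itemize}
I would first fix these constants by demanding equality on the model, and then verify the completed-square inequality.
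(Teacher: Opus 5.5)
\textbf{There is a genuine gap: the central inequality is never derived.} The weighted level-set inequality you display is only something you ``aim for''. The exponent $a$ and the coefficients $\gamma'$, $c$ are never determined, and you yourself name fixing them and checking the completed square as the main obstacle. That inequality is the entire content of the theorem. Nothing in the proposal shows that a drift $\gamma'u^{-1}\langle\nabla u,\nabla v\rangle$ plus a weight $u^{a}$ can simultaneously produce $H+\gamma u^{-1}u_\nu$ on $\partial M$, $S=-\gamma u^{-1}\Delta_g u+\tfrac12R_g$ in the bulk, and the sharp constant $\Gamma=\tfrac{3-\gamma}{4-\gamma}$.

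The idea you are missing is that no drift or weight is needed. The paper keeps the plain equation $\Delta_g v+3f|\nabla v|=0$. It lets $u$ enter only through $\int_M|\nabla\varphi|^2\ge-\int_M\varphi^2u^{-1}\Delta_g u+\int_{\partial M}\varphi^2u^{-1}u_\nu$ with $\varphi=|\nabla v|^{1/2}$. In Proposition~\ref{spectral hkk f} one proceeds as follows:
\begin{enumerate}
\item add $-\tfrac{\gamma}{3-\gamma}$ times the divergence identity for $f\nabla v$ to Proposition~\ref{hkk formula};
\item apply $|\bar\nabla^2v|^2\ge\tfrac32|\nabla|\nabla v|+f\nabla v|^2$;
\item split the result as $(6-2\gamma)\bigl|\nabla|\nabla v|^{1/2}+\tfrac{3}{6-2\gamma}f|\nabla v|^{-1/2}\nabla v\bigr|^2+2\gamma\bigl|\nabla|\nabla v|^{1/2}\bigr|^2+\dots$, and trade the second piece for the spectral terms via the inequality above.
\end{enumerate}
With $f=-\tfrac{2(3-\gamma)}{3(4-\gamma)}\eta\circ\zeta$ the bulk integrand becomes $2\bigl(\Gamma(\eta\circ\zeta)^2+\langle\nabla(\eta\circ\zeta),\tfrac{\nabla v}{|\nabla v|}\rangle+S\bigr)|\nabla v|$. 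The boundary terms are nonpositive exactly when $H+\gamma u^{-1}u_\nu\ge-\eta\circ\zeta$ on $\partial_-M$ and $H+\gamma u^{-1}u_\nu\ge\eta\circ\zeta$ on $\partial_+M$. The restriction $\gamma<3$ is just $6-2\gamma>0$. In the rigidity part, $u$ is a constant multiple of $|\nabla v|^{1/2}$ because equality forces $\nabla\varphi=\varphi u^{-1}\nabla u$, not because of a Cauchy--Schwarz step you have not written.

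\textbf{Separately, your cutoff points the wrong way.} If $\tau=t_-+\epsilon+\lambda\rho$ sweeps a sub-interval of $(t_-,t_+)$, then $\tau>t_-$ on $\partial_-M$ and $\tau<t_+$ on $\partial_+M$. Since $\eta$ is decreasing, this gives $\eta(\tau)>\eta(t_+)$ on $\partial_+M$ and $-\eta(\tau)>-\eta(t_-)$ on $\partial_-M$. So hypothesis (b) does \emph{not} give the two inequalities just listed with $\tau$ in place of $\zeta$, and your boundary terms have no sign.

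You need $\tau\le t_-$ on $\partial_-M$ and $\tau\ge t_+$ on $\partial_+M$, i.e.\ $\tau$ must cover an interval containing $[t_-,t_+]$. This is exactly what $w>t_+-t_-$ allows, e.g.\ $\tau=\min\{t_++\delta,\,t_--\delta+\lambda\rho\}$ with $\lambda<1$ close to $1$ and $\delta$ small. The truncation is necessary, since $\rho$ is not bounded by $w$, and for $\Lambda>0$ the function $\eta$ blows up at the right end of $I_{\Lambda,\Gamma}$. Where the truncation is active, $\nabla\tau=0$ and the bulk integrand is $\Gamma\eta^2+\Lambda=-\eta'>0$, so your strictness argument survives.

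The paper instead takes $\lambda=1$ and $\zeta=\min\{t_+,t_-+\operatorname{dist}_g(\cdot,\partial_-M)\}$. Then $\zeta=t_\pm$ on $\partial_\pm M$, so (b) applies verbatim, and the resulting chain of inequalities forces equality. This gives the rigidity analysis directly, without the limit $\lambda\to1$, $\epsilon\to0$ that you would still have to justify.
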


\

The article is organized as follows:

In Section \ref{shf sec}, we review spacetime harmonic functions and establish a spectral analogue of a fundamental integral inequality.
In Section \ref{sec band}, we apply this integral inequality to prove Theorems \ref{spectral acg} and \ref{band width estimate}.
Finally, in Section \ref{sec pmt}, we prove Theorem \ref{spec pmt}.

\section*{\textbf{Acknowledgments}} 
Yimin Chen was supported by a start-up grant from Hainan University (XJ2600000222). Juncheol Pyo was supported by the National Research Foundation of Korea (RS-2025-23524266).

\section{The spacetime harmonic function and integral inequality}\label{shf sec}
The primary tool employed in this paper is the spacetime harmonic function. A triple $(M^3, g, k)$ consisting of a 3-Riemannian manifolds $(M^3,g)$ and a symmetric 2-tensor $k$ is called an {\it{initial data set}}. Let us denote $\Delta$, $\nabla$ and $\nabla^2$ the Laplacian, Levi-Civita connection and Hessian on $M$, respectively. 
\begin{definition}
  A positive smooth function $u$ on $(M^3, g, k)$ is called a spacetime harmonic function with $k=f g$ for a smooth function $f$ on $M$ if it satisfies
  \begin{equation}
    \Delta u + 3 f | \nabla u | = 0. \label{shf}
  \end{equation}
\end{definition}
We denote by $\overline\nabla^2$ the modified Hessian given by
$$\overline\nabla^2 v=\nabla^2 v+f|\nabla v|g$$
for any $v\in C^2(M)$. Then the equation \eqref{shf} can be simply written as $$\overline\Delta v=\operatorname{tr}\overline\nabla^2 v=0.$$
In \cite{hirsch-spacetime-2022}, an application of the Bochner formula and the Gauss-Bonnet theorem yields the following integral inequality.
\begin{proposition}[{\cite{hirsch-spacetime-2022}}]
  \label{hkk formula}
  Let $(M, \partial_{\pm}M, g)$ be a 3-dimensional Riemannian band, and let $f\in C^{\infty}(M)$. Let $v$ be a spacetime harmonic function with $v = \pm c$ on
  $\partial_{\pm} M$, respectively. Then, we have the following integral inequality
      \begin{align}
          & \int_{\partial_- M} 2 | \nabla v | (2 f - H_{\partial_- M}) d A -
    \int_{\partial_+ M} 2 | \nabla v | (2 f + H_{\partial_+ M}) d A\\
     \geq    & \int_{M^3} \left( \frac{| \bar{\nabla}^2 v |^2}{| \nabla v |} +
    (R_g + 6 f^2) | \nabla v | - 4 \langle \nabla f, \nabla v \rangle \right)
    - \int_{c_-}^{c_+} 4 \pi \chi (\Sigma_t) d t
      \end{align}
  where $\bar{\nabla}^2 v = \nabla^2 v + f | \nabla v | g$.
\end{proposition}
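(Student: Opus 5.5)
The plan is to follow the argument of \cite{hirsch-spacetime-2022}: a Bochner identity for the modified Hessian, integrated over the regular level sets of $v$ by the coarea formula, with the Gauss equation and Gauss--Bonnet applied on each level set. Write $\Sigma_t=\{v=t\}$ and, at regular points, $\nu=\nabla v/|\nabla v|$. First I treat the case where $|\nabla v|$ is replaced by $\sqrt{|\nabla v|^2+\epsilon}$, or equivalently where one works away from the critical set. Then I let $\epsilon\to0$, using Sard's theorem and the fact that the critical set of a solution of \eqref{shf} has measure zero.

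The key pointwise step is to compute $\Delta|\nabla v|$ on $\{\nabla v\neq0\}$. Bochner gives
\[
\tfrac12\Delta|\nabla v|^2=|\nabla^2v|^2+\langle\nabla v,\nabla\Delta v\rangle+\mathrm{Ric}(\nabla v,\nabla v).
\]
I substitute $\Delta v=-3f|\nabla v|$ and use the traced Gauss equation on $\Sigma_t$:
\[
2\,\mathrm{Ric}(\nu,\nu)=R_g-R_{\Sigma}+H^2-|A|^2,
\]
where $H=-\Delta v/|\nabla v|-\nabla^2v(\nu,\nu)/|\nabla v|$ and $A=\nabla^2v|_{T\Sigma}/|\nabla v|$. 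Expanding $|\bar\nabla^2v|^2=|\nabla^2v|^2+2f|\nabla v|\Delta v+3f^2|\nabla v|^2$ and using \eqref{shf} repeatedly, I expect the identity
\[
\Delta|\nabla v|\ \ge\ \frac{1}{2|\nabla v|}\Big(|\bar\nabla^2v|^2+(R_g+6f^2-R_\Sigma)|\nabla v|^2\Big)-2\langle\nabla f,\nabla v\rangle+\operatorname{div}(\text{terms in }f\nabla v)
\]
up to a divergence. The divergence-type terms in $f$ should combine to $\operatorname{div}(2f\nabla v)$ or a similar expression; the $2f$ in the boundary terms suggests exactly this. This algebra is the main obstacle: I must track the coefficients so that one gets precisely $6f^2$ and $-4\langle\nabla f,\nabla v\rangle$ after multiplying by $2$. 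I would organize it by writing $\nabla^2v$ in an orthonormal frame adapted to $\nu$ and completing squares, discarding a nonnegative term $|\nabla_\Sigma|\nabla v||^2/|\nabla v|$ if needed, as in the original paper.

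Next I integrate $2\Delta|\nabla v|-4\operatorname{div}(f\nabla v)$ (or the appropriate combination) over $M$ and apply the divergence theorem. On $\partial_\pm M$, which are level sets, the unit normal is $\nu_\pm=\pm\nu$. This gives $\partial_{\nu}|\nabla v|=\nabla^2v(\nu,\nu)=-|\nabla v|(H_\nu+3f)$, with $H_\nu$ the mean curvature of $\Sigma_t$ with respect to $\nu$. With the outward convention this produces exactly
\[
\int_{\partial_-M}2|\nabla v|(2f-H_{\partial_-M})-\int_{\partial_+M}2|\nabla v|(2f+H_{\partial_+M})
\]
once the $f$-divergence term supplies its $\pm f|\nabla v|$ contribution; I would fix the signs of the divergence terms by matching this.

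Finally, the term $-\int_M R_\Sigma|\nabla v|$ becomes $-\int_{c_-}^{c_+}\int_{\Sigma_t}R_\Sigma\,dA\,dt$ by the coarea formula. Gauss--Bonnet on each regular $\Sigma_t$ then gives $-\int_{c_-}^{c_+}4\pi\chi(\Sigma_t)\,dt$. The regularization is justified as in \cite{hirsch-spacetime-2022}, since \eqref{shf} has enough regularity ($C^{2,\alpha}$ away from the critical set and $C^{1,\alpha}$ globally) for the limit to hold, with inequality from Fatou's lemma.
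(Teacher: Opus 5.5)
Your outline is the argument the paper relies on: the paper gives no proof of its own, only citing \cite{hirsch-spacetime-2022} and summarizing it as Bochner plus Gauss--Bonnet. Your boundary formula, the coarea/Gauss--Bonnet step and the $\epsilon$-regularization are fine. However, the step that carries the proof, the pointwise identity, is left as ``expected'', and two of the explicit formulas you would feed into it are wrong, so as written the computation does not close.

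First, the mean curvature of $\Sigma_t$ with respect to $\nu=\nabla v/|\nabla v|$ is $H=(\Delta v-\nabla^2v(\nu,\nu))/|\nabla v|=-3f-\nabla^2v(\nu,\nu)/|\nabla v|$. Your formula has the wrong relative sign between the two terms; your later boundary formula $\nabla^2v(\nu,\nu)=-|\nabla v|(H_\nu+3f)$ uses the correct one. With your version, the cross terms $f|\nabla v|\nabla^2v(\nu,\nu)$ coming from $\langle\nabla v,\nabla\Delta v\rangle$ and from $\tfrac12H^2|\nabla v|^2$ do not cancel. Second, integrating $2\Delta|\nabla v|-4\operatorname{div}(f\nabla v)$ gives boundary integrands $-2|\nabla v|(5f+H_{\partial_+M})$ and $2|\nabla v|(5f-H_{\partial_-M})$. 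The correct vector field is $X=2\nabla|\nabla v|+2f\nabla v$; that is, the divergence term in your displayed inequality is $-\operatorname{div}(f\nabla v)$.

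To close the step, work in a frame $\{e_1,e_2,\nu\}$. There one has $|\nabla^2v|^2-|\nabla|\nabla v||^2=\sum_iv_{\nu i}^2+|A|^2|\nabla v|^2$ and $\tfrac12|\bar\nabla^2v|^2=\tfrac12v_{\nu\nu}^2+\sum_iv_{\nu i}^2+\tfrac12|A|^2|\nabla v|^2-\tfrac32f^2|\nabla v|^2$. Also $\langle\nabla v,\nabla\Delta v\rangle=-3|\nabla v|\langle\nabla f,\nabla v\rangle-3f|\nabla v|v_{\nu\nu}$ and $H|\nabla v|=-3f|\nabla v|-v_{\nu\nu}$. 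Substitute these and the Gauss equation into Bochner and subtract $\tfrac12|\bar\nabla^2v|^2$. All $v_{\nu i}$, $|A|^2$ and $v_{\nu\nu}$ terms cancel, leaving on $\{\nabla v\neq0\}$
\[
\Delta|\nabla v|=\frac{|\bar\nabla^2v|^2}{2|\nabla v|}+\tfrac12\bigl(R_g+12f^2-R_\Sigma\bigr)|\nabla v|-3\langle\nabla f,\nabla v\rangle .
\]
Using $\operatorname{div}(f\nabla v)=\langle\nabla f,\nabla v\rangle-3f^2|\nabla v|$, this gives
\[
\operatorname{div}\bigl(2\nabla|\nabla v|+2f\nabla v\bigr)=\frac{|\bar\nabla^2v|^2}{|\nabla v|}+\bigl(R_g+6f^2-R_\Sigma\bigr)|\nabla v|-4\langle\nabla f,\nabla v\rangle .
\]
This is an equality, so nothing should be discarded pointwise. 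The term $|\nabla_\Sigma|\nabla v||^2/|\nabla v|$ is part of $|\bar\nabla^2v|^2/|\nabla v|$, and dropping it would weaken the conclusion. The inequality in the proposition comes only from the regularization at critical points, via $|\nabla\sqrt{|\nabla v|^2+\epsilon}|\le|\nabla|\nabla v||$, and from Fatou's lemma as $\epsilon\to0$. Evaluating $\langle X,\nu_\pm\rangle$ with your boundary formula gives exactly $-2|\nabla v|(2f+H_{\partial_+M})$ and $2|\nabla v|(2f-H_{\partial_-M})$. The coarea formula and Gauss--Bonnet then finish the argument as you describe.
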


In order to deal with the condition on spectral curvature condition, we need a spectral analog of Proposition \ref{hkk formula}.

\begin{proposition}
  \label{spectral hkk f}For the function \( v \) which satisfies
  \[ \Delta_g v + 3 f | \nabla v| = 0 \]
  with \( v = c_{\pm} \) on \( \partial_{\pm} M \) the following inequality holds 
  \begin{align}
    & \int_{\partial_- M} | \nabla v| \left( \tfrac{3 (4 - \gamma)}{3 -
    \gamma} f - 2 (H_{\partial_- M} + \gamma u^{- 1} u_{\nu_-}) \right) 
    \nonumber\\
    & \qquad - \int_{\partial_+ M} | \nabla v| \left( \tfrac{3 (4 -
    \gamma)}{3 - \gamma} f + 2 (H_{\partial_+ M} + \gamma u^{- 1} u_{\nu_+})
    \right)\nonumber \\
    \geq    & (6 - 2 \gamma) \int_M \left| \nabla | \nabla v|^{\tfrac{1}{2}}
    + \tfrac{3}{6 - 2 \gamma} f| \nabla v|^{- \tfrac{1}{2}} \nabla v \right|^2
    - \int_{c_-}^{c_+} 4 \pi \chi (\Sigma_t) \mathrm{d} t \nonumber\\
    & \quad + \int_M (2 (- \gamma u^{- 1} \Delta_g u + \tfrac{1}{2} R_g) +
    \tfrac{9 (4 - \gamma)}{2 (3 - \gamma)} f^2 - \tfrac{3 (4 - \gamma)}{3 -
    \gamma} \langle \nabla f, \tfrac{\nabla v}{| \nabla v|} \rangle) | \nabla
    v| . 
  \end{align}
\end{proposition}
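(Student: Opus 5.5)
The proof reruns the argument behind Proposition \ref{hkk formula} (Bochner formula, level-set Gauss equation, coarea formula, Gauss--Bonnet). The only new term is $2\gamma u^{-1}\Delta_g u\,|\nabla v|$, which appears because $R_g = 2(-\gamma u^{-1}\Delta_g u+\tfrac12 R_g)+2\gamma u^{-1}\Delta_g u$. I would integrate this term by parts and absorb the result into the Hessian term using a refined Kato inequality. Throughout, I would work with the regularization $\phi_\epsilon=\sqrt{|\nabla v|^2+\epsilon}$ as in \cite{hirsch-spacetime-2022}, which handles critical points, and let $\epsilon\to0$ at the end.

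\textbf{Step 1 (pointwise identity).} Combine the Bochner formula for $\Delta|\nabla v|$ with $\Delta v=-3f|\nabla v|$. On regular level sets $\Sigma_t$, use the traced Gauss equation $2\operatorname{Ric}(\nu,\nu)=R_g-R_{\Sigma_t}+H^2-|A|^2$ with $\nu=\nabla v/|\nabla v|$, and express $H$ through $\Delta v-\nabla^2v(\nu,\nu)$. Integrating over $M$ and applying the coarea formula together with Gauss--Bonnet on $\Sigma_t$ gives an identity of the following form. The boundary terms are $\int_{\partial M}\partial_\nu|\nabla v|$, rewritten through $H_{\partial_\pm M}$ and $f$ because $\partial_\pm M$ are level sets. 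The bulk integrand is $\tfrac12(R_g+6f^2)|\nabla v|$, plus a term in $\langle\nabla f,\nabla v\rangle$, plus a Hessian part in $|\nabla^2v|^2-|\nabla|\nabla v||^2$ divided by $|\nabla v|$, minus $\int 2\pi\chi(\Sigma_t)$. Unlike \cite{hirsch-spacetime-2022}, I would not discard the Hessian part in favour of $|\bar\nabla^2v|^2$. Instead I would estimate it by the refined Kato inequality for the trace-free-in-$\bar\nabla^2$ equation $\operatorname{tr}\bar\nabla^2 v=0$ in dimension three. This gives a lower bound of the form $\tfrac32\,|\nabla|\nabla v|+c f\nabla v|^2/|\nabla v|$, up to explicit $f^2|\nabla v|$ and $\langle\nabla f,\nabla v\rangle$ corrections.

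\textbf{Step 2 (spectral term).} Integrate by parts:
\[
\int_M 2\gamma u^{-1}\Delta_g u\,|\nabla v|=\int_{\partial M}2\gamma u^{-1}u_\nu|\nabla v|+\int_M 2\gamma u^{-2}|\nabla u|^2|\nabla v|-\int_M 2\gamma u^{-1}\langle\nabla u,\nabla|\nabla v|\rangle .
\]
The boundary integral combines with $H_{\partial_\pm M}$ to produce the spectral mean curvature $H+\gamma u^{-1}u_\nu$ in the statement. For the last bulk term, Cauchy--Schwarz gives
\[
2\gamma u^{-1}|\langle\nabla u,\nabla|\nabla v|\rangle|\le 2\gamma u^{-2}|\nabla u|^2|\nabla v|+\tfrac{\gamma}{2}\,\frac{|\nabla|\nabla v||^2}{|\nabla v|}.
\]
The $u^{-2}|\nabla u|^2$ terms then cancel exactly. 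The net effect is that the coefficient $\tfrac32$ in front of $|\nabla|\nabla v||^2/|\nabla v|$ drops to $\tfrac{3-\gamma}{2}$. Since $(6-2\gamma)\bigl|\nabla|\nabla v|^{1/2}\bigr|^2=\tfrac{3-\gamma}{2}|\nabla|\nabla v||^2/|\nabla v|$, this reduced coefficient is the $(6-2\gamma)$ prefactor in the statement.

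\textbf{Step 3 (re-completing the square in $f$).} With the reduced coefficient, I would complete the square again. The linear cross term $f\langle\nabla|\nabla v|,\nabla v\rangle/|\nabla v|$ is absorbed into
\[
(6-2\gamma)\Bigl|\nabla|\nabla v|^{1/2}+\tfrac{3}{6-2\gamma}f|\nabla v|^{-1/2}\nabla v\Bigr|^2 .
\]
The leftover pieces are then rewritten as a divergence, using
\[
\operatorname{div}(f\nabla v)=\langle\nabla f,\nabla v\rangle-3f^2|\nabla v|
\quad\text{and}\quad
\operatorname{div}\bigl(f|\nabla v|\,\nabla v/|\nabla v|\bigr)
\]
along level sets. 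This divergence produces the boundary term $\tfrac{3(4-\gamma)}{3-\gamma}f|\nabla v|$ and the bulk terms $\tfrac{9(4-\gamma)}{2(3-\gamma)}f^2|\nabla v|$ and $-\tfrac{3(4-\gamma)}{3-\gamma}\langle\nabla f,\nu\rangle|\nabla v|$.

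\textbf{Main obstacle.} I expect the main difficulty to be the bookkeeping in Steps 1 and 3. The refined Kato estimate for $\bar\nabla^2v$, the $\gamma$-loss from Step 2, and the $f$-terms must close exactly with the constants $\tfrac{3(4-\gamma)}{3-\gamma}$ and $\tfrac{9(4-\gamma)}{2(3-\gamma)}$. To locate the right divergence identity, I would first check the whole computation on the model metric of Theorem \ref{spectral acg}, where every inequality should be an equality.
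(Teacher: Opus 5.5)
Your proposal is correct and is essentially the paper's proof. The paper starts from Proposition~\ref{hkk formula}, which you can cite rather than rederive. It applies the Kato inequality $|\bar\nabla^2 v|^2\ge\tfrac32|\nabla|\nabla v|+f\nabla v|^2$. It then gives up $2\gamma|\nabla|\nabla v|^{1/2}|^2=\tfrac{\gamma}{2}|\nabla|\nabla v||^2/|\nabla v|$ to control $2\gamma u^{-1}\Delta_g u\,|\nabla v|$ by integration by parts; your Cauchy--Schwarz step is equivalent to the paper's use of $|\nabla\varphi-\varphi u^{-1}\nabla u|^2\ge 0$ with $\varphi=|\nabla v|^{1/2}$. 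Finally it completes the square with coefficient $6-2\gamma$.

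The only loose point is your Step~3. The two divergences you list are the same field, $f|\nabla v|\,\nabla v/|\nabla v|=f\nabla v$. All that is needed is to add the exact identity $\tfrac{\gamma}{3-\gamma}\bigl(\int_{\partial_-M}f|\nabla v|-\int_{\partial_+M}f|\nabla v|\bigr)=\tfrac{\gamma}{3-\gamma}\int_M\bigl(3f^2|\nabla v|-\langle\nabla f,\nabla v\rangle\bigr)$. This changes the coefficients of $f|\nabla v|$ on the boundary, of $f^2|\nabla v|$ in the bulk, and of $\langle\nabla f,\nabla v\rangle$ in the bulk from $4$, $\tfrac{36-15\gamma}{2(3-\gamma)}$, $-4$ to the stated $\tfrac{3(4-\gamma)}{3-\gamma}$, $\tfrac{9(4-\gamma)}{2(3-\gamma)}$, $-\tfrac{3(4-\gamma)}{3-\gamma}$.
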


\begin{proof}
  We give a proof for \( v \) with its gradient
non-vanishing everywhere. For the general case, we can apply similar arguments
for \( \varphi = \sqrt{| \nabla v|^2 + \varepsilon^2} \), \( \varepsilon > 0
\) and then take limits, see {\cite{hirsch-spacetime-2022}}.

First, we note that the divergence theorem and that \( v \) is spacetime
harmonic yields
\begin{align}
  & - \tfrac{\gamma}{3 - \gamma} \int_{\partial M} f \nabla_{\nu} v
  \\
  = & - \tfrac{\gamma}{3 - \gamma} \int_M \ensuremath{\operatorname{div}}_g (f
  \nabla v) \\
  = & - \tfrac{\gamma}{3 - \gamma} \int_M (\langle \nabla f, \nabla v \rangle
  + f \Delta_g v) \\
  = & - \tfrac{\gamma}{3 - \gamma} \int_M (\langle \nabla f, \nabla v \rangle
  - 3 f^2 | \nabla v|) . 
\end{align}
Since \( v = c_{\pm} \) on \( \partial_{\pm} M \) and \( c_+ > c_- \), \(
\nabla_{\nu} v = \pm | \nabla v| \) on \( \partial_{\pm} M \). Hence,
\begin{equation}
  - \tfrac{\gamma}{3 - \gamma} \left( \int_{\partial_+ M} f| \nabla v| -
  \int_{\partial_- M} f| \nabla v| \right) = - \tfrac{\gamma}{3 - \gamma}
  \int_M (\langle \nabla f, \nabla v \rangle - 3 f^2 | \nabla v|) .
  \label{exra div term}
\end{equation}
We add \eqref{exra div term} to the integral inequality in Proposition
\ref{hkk formula}, we obtain that
\begin{align}
  & \int_{\partial_- M} | \nabla v| \left( \tfrac{3 (4 - \gamma)}{3 - \gamma}
  f - 2 H_{\partial_- M} \right) - \int_{\partial_+ M} | \nabla v| \left(
  \tfrac{3 (4 - \gamma)}{3 - \gamma} f + 2 H_{\partial_+ M} \right) 
  \\
  \geq    & \int_M \left( \frac{| \bar{\nabla}^2 v|^2}{| \nabla v|} + (R_g +
  \tfrac{3 (6 - \gamma)}{3 - \gamma} f^2) | \nabla v| - \tfrac{3 (4 -
  \gamma)}{3 - \gamma} \langle \nabla f, \nabla v \rangle \right)\\
  &-
  \int_{c_-}^{c_+} 4 \pi \chi (\Sigma_t) \mathrm{d} t. \label{before kato}
\end{align}
We make use of a Kato type inequality (see {\cite[Remark
4.4]{hirsch-rigid-2025}}) on \( \frac{| \bar{\nabla}^2 v|^2}{| \nabla v|} \)
as
\begin{align}
  & \frac{| \bar{\nabla}^2 v|^2}{| \nabla v|} \\
  \geq    & \frac{3 | \nabla | \nabla v| + f \nabla v|^2}{2 | \nabla v|}
  \\
  = & 6 | \nabla | \nabla v|^{\tfrac{1}{2}} |^2 + 6 \langle \nabla | \nabla
  v|^{\tfrac{1}{2}}, f | \nabla v|^{- \tfrac{1}{2}} \nabla v \rangle +
  \tfrac{3}{2} f^2 | \nabla v| \\
  = & (6 - 2 \gamma) \left| \nabla | \nabla v|^{\tfrac{1}{2}} + \tfrac{3}{6 -
  2 \gamma} f| \nabla v|^{- \tfrac{1}{2}} \nabla v \right|^2 + 2 \gamma |
  \nabla | \nabla v|^{\tfrac{1}{2}} |^2 + (\tfrac{3}{2} - \tfrac{9}{6 - 2
  \gamma}) f^2 | \nabla v| . 
\end{align}
Hence with the above in \eqref{before kato}, we see
\begin{align}
  & \int_{\partial_- M} | \nabla v| \left( \tfrac{3 (4 - \gamma)}{3 - \gamma}
  f - 2 H_{\partial_- M} \right) - \int_{\partial_+ M} | \nabla v| \left(
  \tfrac{3 (4 - \gamma)}{3 - \gamma} f + 2 H_{\partial_+ M} \right) 
  \\
   & \geq\;(6 - 2 \gamma) \int_M \left| \nabla | \nabla v|^{\tfrac{1}{2}} +
  \tfrac{3}{6 - 2 \gamma} f| \nabla v|^{- \tfrac{1}{2}} \nabla v \right|^2 +
  \int_M 2 \gamma | \nabla | \nabla v|^{\tfrac{1}{2}} |^2 \label{after kato}
  \\
  & \quad + \int_M \left( (R_g + \tfrac{9 (4 - \gamma)}{2 (3 - \gamma)} f^2)
  | \nabla v| - \tfrac{3 (4 - \gamma)}{3 - \gamma} \langle \nabla f, \nabla v
  \rangle \right) - \int_{c_-}^{c_+} 4 \pi \chi (\Sigma_t) \mathrm{d} t.\label{determineu}
\end{align}
Now we give an estimate of \( \int_M | \nabla | \nabla v|^{\tfrac{1}{2}} |^2
\) in terms of \( u \): by integration by parts,
\begin{align}
   &- \int_M \varphi^2 u^{- 1} \Delta_g u + \int_{\partial M} \varphi^2 u^{-
  1} u_{\nu} \\
  = & - \int_M \varphi^2 u^{- 1} \Delta_g u + \int_M
 \mbox{div}_g (\varphi^2 u^{- 1} \nabla u) \\
  = & \int_M \langle \nabla (\varphi^2 u^{- 1}), \nabla u \rangle \\
  = & - \int_M | \nabla \varphi - \tfrac{\varphi \nabla u}{u} |^2 + \int_M |
  \nabla \varphi |^2 \leq   \int_M | \nabla \varphi |^2 . 
\end{align}
Letting \( \varphi = | \nabla v|^{\tfrac{1}{2}} \) gives
\[ - \int_M u^{- 1} \Delta_g u | \nabla v| + \int_{\partial M} u^{- 1} u_{\nu}
   | \nabla v| \leq   \int_M | \nabla | \nabla v|^{\tfrac{1}{2}} |^2 . \]
The inequality above and \eqref{after kato} then finish the proof.
\end{proof}

\section{Spectral cuspidal rigidity and the band width estimate}\label{sec band}

In this section, we prove the spectral scalar rigidity for cusps (Theorem \ref{spectral acg}) and the band width estimate (Theorem \ref{band width estimate}). The idea is to select a suitable \( f \) in
Proposition \ref{spectral hkk f} and then perform the rigidity analysis.

Now we give the proof for Theorem \ref{spectral acg}. 

\begin{proof}[Proof of Theorem \ref{spectral acg}]
  Let \( v \) be the function which solves
  \[ \Delta_g v - (6 - 2 \gamma) \sqrt{\tfrac{- \Lambda}{(3 - \gamma) (4 -
     \gamma)}}  | \nabla v| = 0 \]
  with \( v = c_{\pm} \) on \( \partial_{\pm} M \), that is, we set \( f = -
  \tfrac{6 - 2 \gamma}{3} \sqrt{\tfrac{- \Lambda}{(3 - \gamma) (4 - \gamma)}}
  \) in Proposition \ref{spectral hkk f}. This leads to an integral inequality
  for \( v \) as follows
\begin{align}
& 2 \int_{\partial_- M} | \nabla v| \left( - \sqrt{- \Lambda \tfrac{4 -
\gamma}{3 - \gamma}} - (H_{\partial_- M} + \gamma u^{- 1} u_{\nu_-})
\right)  \\
& \qquad - 2 \int_{\partial_+ M} | \nabla v| \left( - \sqrt{- \Lambda
\tfrac{4 - \gamma}{3 - \gamma}} + (H_{\partial_+ M} + \gamma u^{- 1}
u_{\nu_+}) \right) \\
\geq    & (6 - 2 \gamma) \int_M \left| \nabla | \nabla v|^{\tfrac{1}{2}}
- \sqrt{\tfrac{- \Lambda}{(3 - \gamma) (4 - \gamma)}} | \nabla v|^{-
\tfrac{1}{2}} \nabla v \right|^2 - \int_{c_-}^{c_+} 4 \pi \chi (\Sigma_t)
\mathrm{d} t \\
& \quad + \int_M 2 (- \gamma u^{- 1} \Delta_g u + \tfrac{1}{2} R_g -
\Lambda) | \nabla v| .
\end{align}
  It follows that \( \chi (\Sigma_t) \leq    0 \) from the toroidal
  structure of \( M \); then we apply the spectral scalar curvature and
  spectral mean curvature bound, we obtain that
  \begin{equation}
    \nabla | \nabla v|^{\frac{1}{2}} - \sqrt{\tfrac{- \Lambda}{(3 - \gamma) (4
    - \gamma)}} | \nabla v|^{- \frac{1}{2}} \nabla v = 0 \label{gradeq}
  \end{equation}
  almost everywhere. This implies that
  \[ \nabla | \nabla v| - 2 \sqrt{\tfrac{- \Lambda}{(3 - \gamma) (4 -
     \gamma)}} | \nabla v| \nabla v = 0. \]
  Since \( | \nabla v| \neq 0 \) almost everywhere, the metric splits, that
  is, \( g = \frac{dv^2}{| \nabla v|^2} + g_v \), where \( g_v \) is a family
  of metrics on \( \mathbb{T}^2 \).
  
  Firstly from
  \[ d (| \nabla v|^2) = 2| \nabla v|d (| \nabla v|) = \tfrac{4 \sqrt{-
     \Lambda}}{\sqrt{(3 - \gamma) (4 - \gamma)}} | \nabla v|^2 dv, \]
  we have
  \begin{equation}
    d (| \nabla v|^2) \wedge dv = 0, \label{integrable}
  \end{equation}
  and therefore \( d \left( \frac{\nabla v}{| \nabla v|} \right) = 0 \) for \(
  | \nabla v| \neq 0 \), which implies that there exist \( t \), such that \( dt
  = dv / | \nabla v| \), and the metric can be written by
  \[ g = dt^2 + \phi^2 (t) g_0 .\]
  From the argument in \cite{hirsch-spectral-2024},
  \begin{eqnarray*}
    2 \frac{\phi'}{\phi} = H (t) = \frac{\Delta v - \nabla_{tt} v}{| \nabla
    v|} & = & \frac{2 \alpha (\gamma - 3)}{\gamma - 2} + \frac{2
    \alpha}{\gamma - 2} = 2 \alpha,
  \end{eqnarray*}
  therefore we have \( \phi = e^{\alpha t} \). Therefore, the metric is given
  by
  \[ g = dt^2 + e^{2 \alpha t} g_{\mathbb{T}^2} . \]
  Tracing back all the equalities in Proposition \ref{spectral hkk f}, we see
  from \eqref{determineu} that $$\frac{\nabla|\nabla v|^{\frac{1}{2}}}{|\nabla v|^{\frac{1}{2}}}=\frac{\nabla u}{u}$$
  \( u \) must be a constant multiple of \( | \nabla v|^{\tfrac{1}{2}} \),
  and by a direct calculation, it is a constant multiple of \( e^{\beta t} \).
\end{proof}

Now we prove the band width estimate Theorem \ref{band width estimate}.

\begin{proof}
  We prove it by contradiction. Suppose that
  $\ensuremath{\operatorname{width}} (M, g) \geq    t_+ - t_-$. Let $\zeta (x) =
  \min \{ t_+, t_- +\ensuremath{\operatorname{dist}}_g (x, \partial_- M) \}$.
  Then it is easy to see from the assumption that
  \begin{equation} \zeta ({\partial_\pm M}) = t_\pm, \text{ and } | \nabla \zeta (x) | \leq   1. \label{zeta property}
  \end{equation}
Let \( v \) be the spacetime harmonic function given in Proposition
\ref{spectral hkk f} with \( f = - \tfrac{2 (3 - \gamma)}{3 (4 - \gamma)} \eta
\circ \zeta \). A direct calculation with this choice of \( f \) in
Proposition \ref{spectral hkk f} leads to
\begin{align}
  & 2 \int_{\partial_- M} | \nabla v| (-\eta \circ \zeta - (H_{\partial_- M} +
  \gamma u^{- 1} u_{\nu_-}))  \label{neg boundary line} \\
  & \qquad - 2 \int_{\partial_+ M} | \nabla v| (-\eta \circ \zeta + 
  (H_{\partial_+ M} + \gamma u^{- 1} u_{\nu_+})) \label{pos boundary line}
  \\
  \geq    & (6 - 2 \gamma) \int_M \left| \nabla | \nabla v|^{\tfrac{1}{2}} -
  \tfrac{1}{4 - \gamma} \eta \circ \zeta | \nabla v|^{- \tfrac{1}{2}} \nabla v
  \right|^2 - 4 \pi \int_{c_-}^{c_+} \chi (\Sigma_t) \mathrm{d} t \\
  & \quad + \int_M 2 \left( \tfrac{3 - \gamma}{4 - \gamma} (\eta \circ
  \zeta)^2 + \langle \nabla (\eta \circ \zeta), \tfrac{\nabla v}{| \nabla v|}
  \rangle + (- \gamma u^{- 1} \Delta_g u + \tfrac{1}{2} R_g) \right) | \nabla
  v| . \label{ode line} 
\end{align}
We estimate the lines \eqref{neg boundary line}, \eqref{pos boundary line} and
\eqref{ode line}. First, we estimate the integrands in the lines \eqref{neg
boundary line} and \eqref{pos boundary line}. Using the assumptions and \eqref{zeta property}, we see
that
\begin{equation}
  \gamma u^{- 1} u_{\nu_+} + H_{\partial_+ M} \geq    \eta \circ \zeta
  \text{ on } \partial_+ M \text{ and } \gamma u^{- 1} u_{\nu_-} +
  H_{\partial_- M} \geq    -\eta \circ \zeta \text{ on } \partial_- M.
  \label{mc by model}
\end{equation}
Next, by the chain rule and that \( | \nabla \zeta | \leq    1 \),
\[ \langle \nabla (\eta \circ \zeta), \tfrac{\nabla v}{| \nabla v|} \rangle =
   \eta' \circ \zeta \langle \nabla \zeta, \tfrac{\nabla v}{| \nabla v|}
   \rangle \geq    \eta' \circ \zeta ; \]
and \( - \gamma u^{- 1} \Delta_g u + \tfrac{1}{2} R_g \geq    \Lambda \), so
the integrand of the line \eqref{ode line} is greater than \( 2 ((\eta \circ
\zeta)^2 + \eta' \circ \zeta + \Lambda) |\nabla v| \) which vanishes by \eqref{eq general
ode}.
Lastly by the toroidal structure, \( \chi (\Sigma_t) \leq    0 \).
Therefore, we can conclude that
\[ \nabla | \nabla v|^{\tfrac{1}{2}} - \tfrac{1}{4 - \gamma} \eta \circ \zeta
   | \nabla v|^{- \tfrac{1}{2}} \nabla v = 0 \]
almost everywhere, equivalently,
\[ \nabla | \nabla v| - \tfrac{2}{4 - \gamma} \eta \circ \zeta | \nabla v|
   \nabla v = 0. \]
  In the case that equality holds such that the width is
  $t_+ - t_-$, the metric can be split into $g = d t^2 + \phi (t)^2
  g_{\mathbb{T}^2}$, where $d t = | \nabla v |^{- 1} d v$ and $\phi$ satisfies
  that
  \[ 2 \frac{\phi'}{\phi} = \frac{\Delta v - \nabla_{t t} v}{| \nabla v |} = 
     \frac{4 - 2 \gamma}{4 - \gamma} \eta, \]
  therefore we have $\phi (t) = \exp \left( \frac{2 - \gamma}{4 - \gamma}
  \int^t \eta \right)$.
  
  Tracing back all the equalities in Proposition \ref{spectral hkk f}, we see
  from \eqref{determineu} that \( u \) must be a constant multiple of \( | \nabla v|^{\tfrac{1}{2}} \),
  and by a direct calculation, 
it is a constant multiple of $\exp \left( \frac{1}{4 - \gamma} \int^t
\eta \right)$.
\end{proof}

\section{Spectral positive mass theorem}\label{sec pmt}

In this section, we prove our main result the spectral positive mass theorem (Theorem \ref{spec pmt}).
%
%
The proof of the spectral positive mass theorem deals with the same spacetime harmonic function as in the proof of
Theorem \ref{spectral acg}, see Remark \ref{same metric}.

Let \( T_\rho \) be the constant radial coordinate torus in the asymptotic end, and set \( M_\rho \) to be the bounded component of \( M\backslash T_\rho \).

We start with a solution of the spacetime harmonic function \( w  \) which solves
\begin{equation} \label{w}
  \Delta_g w + 3f |\nabla w| =0 \text{ in } M_\rho\text{, }w =0 \text{ on } \partial_-M_\rho, 
  \; w =1 \text{ on } \partial_+ M_\rho, \end{equation}
where we set \( f = - \tfrac{2(3-\gamma)}{3(2-\gamma)}.  \)
Let \( v_\rho = \rho^{\frac{2}{2-\gamma}} w \).

\begin{lemma}
  \label{apriori bound}There exists constants $C > 0$ and $r_{\ast} > 1$, such
  that for all $\rho > r_{\ast}$,
  \[ |v_{\rho} - r^{\tfrac{2}{2 - \gamma}} | \leq    C \text{ on } M_{\rho}
     \backslash M_{r_{\ast}} . \]
\end{lemma}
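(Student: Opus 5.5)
The plan is to compare $v_\rho$ with explicit radial barriers built from the model solution $r^{a}$, $a=\tfrac{2}{2-\gamma}$. The equation is $\mathcal{Q}(v):=\Delta_g v-\tfrac{2(3-\gamma)}{2-\gamma}|\nabla v|=0$, since $3f=-\tfrac{2(3-\gamma)}{2-\gamma}$. It is invariant under $v\mapsto \lambda v+\mu$ for $\lambda>0$. It is also a quasilinear operator that is Lipschitz in the gradient and has no zeroth-order term, so the comparison principle applies: a subsolution lying below a supersolution on the boundary of a domain stays below it inside.

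\textbf{Step 1: $r^a$ is an approximate solution.} On the model metric $r^{-2}\mathrm{d}r^2+r^2g_{\mathbb{T}^2}$ the volume density is $r$, so for a radial $\phi$ we have $\Delta\phi=r^{-1}(r^3\phi')'$ and $|\nabla\phi|=r|\phi'|$. For $\phi=r^s$ with $s>0$ this gives $\mathcal{Q}(r^s)=s(s+2-c)r^s$ with $c=\tfrac{2(3-\gamma)}{2-\gamma}=a+2$. Hence $r^a$ solves the model equation exactly. The same computation for the linearization at $r^a$, namely $\Delta-c\,\tfrac{\nabla r^a}{|\nabla r^a|}\cdot\nabla$, shows that it sends $r^{s}$ to $s(s-a)r^{s}$.

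For the actual metric \eqref{asymptotics g}, the perturbation $r^{2-\kappa}m+Q_g$ and its derivatives change $\Delta_g r^a$ and $|\nabla r^a|$ by relative errors $O(r^{-\kappa})$. Therefore $|\mathcal{Q}(r^a)|\le C r^{a-\kappa}=Cr^{-2}$, because $a-\kappa=\tfrac{2\gamma-4}{2-\gamma}=-2$.

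\textbf{Step 2: barriers absorbing the error.} I would take
\[
\phi_\pm=r^a\pm\bigl(A - B r^{-\sigma}\bigr)
\]
with $\sigma>0$ small. Since $A$ is a constant it does not affect $\mathcal{Q}$. The term $\mp Br^{-\sigma}$ contributes $\mp B\sigma(\sigma+a)r^{-\sigma}$ to leading order, because the coefficient $s(s-a)$ at $s=-\sigma$ is positive. This dominates the $Cr^{-2}$ error once $r\ge r_*$ and $B$ is fixed, and it gives $\mathcal{Q}(\phi_+)\le0\le\mathcal{Q}(\phi_-)$ on $\{r\ge r_*\}$. Here $r_*$ and $B$ depend only on the metric. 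Note that $|\nabla\phi_\pm|\sim a r^a\neq0$, so the nonlinearity causes no trouble. The constant $A$ is then chosen so that $\phi_-\le v_\rho\le\phi_+$ on both boundary tori of $M_\rho\backslash M_{r_*}$:

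on $T_\rho$ we have $v_\rho=\rho^a$, so any $A\ge B$ works there;

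on $T_{r_*}$ we need $|v_\rho|\le A'$ with $A'$ independent of $\rho$.

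Comparison then yields $|v_\rho-r^a|\le A+B$ on $M_\rho\backslash M_{r_*}$.

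\textbf{Step 3 (main obstacle): a uniform bound for $v_\rho$ on $M_{r_*}$.} This is where the inner region, with boundary at $r=r_0$ and no asymptotic control, enters. Since $K$ is empty, $M$ is globally $[r_0,\infty)\times\mathbb{T}^2$ in the coordinate $\psi$, and the metric coefficients are bounded in $C^2$ on $\{r\le r_*\}$. By the maximum principle $0\le w\le1$, so $0\le v_\rho$, and the only issue is an upper bound $v_\rho\le C$ on $M_{r_*}$, i.e.\ $w\le C\rho^{-a}$.

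I would first try to build a global supersolution on all of $M_\rho$ of the form $\Phi=h(r)$, where:
\begin{itemize}
\item $h(r_0)=0$ and $h(\rho)\ge\rho^a$;
\item on $[r_0,r_*]$, $h$ is a steep convex-adjusted profile, e.g.\ $h=K(e^{\lambda(r-r_0)}-1)$ run backwards, or $h=K(1-e^{-\lambda(r-r_0)})$, chosen so that $\mathcal{Q}(h)\le 0$ there using only bounded geometry;
\item for $r\ge r_*$, $h$ is glued by a minimum to $\phi_+$.
\end{itemize}
The minimum of two supersolutions is again a supersolution in the viscosity sense. Comparing $\rho^a w$ with $\Phi$ then gives $v_\rho\le \Phi\le C$ on $M_{r_*}$.

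If such an explicit profile is awkward, the fallback is a contradiction and compactness argument. Suppose $\sup_{M_{r_*}}v_\rho\to\infty$ along a sequence of $\rho$. Normalize $\tilde v_\rho=v_\rho/\sup_{M_{r_*}}v_\rho$ and pass to a limit, which is a nonnegative solution on all of $M$ with $\tilde v=0$ on $\partial M$. By Step 2 applied with the rescaled boundary data, $\tilde v$ grows at most like $\epsilon r^a+1$ at infinity for every $\epsilon>0$. A Liouville/maximum principle argument then forces the limit to be constant, hence zero, contradicting the normalization $\sup_{M_{r_*}}\tilde v=1$.
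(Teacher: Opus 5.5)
Your proposal is correct. It reaches the bound by a partly different route from the paper.

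\textbf{How the paper argues.} The paper builds a single global upper barrier $z^+$ on all of $M_\rho$. On the compact piece $M_{r_1}$ it takes $c_1 w_{r_1}$, a rescaled solution of the Dirichlet problem there. Outside it takes $r^{a}+\mathrm{const}+\lambda r^{-2}$. The Hopf lemma gives $\partial_\nu w_{r_1}>0$, so $c_1$ can be chosen so large that the inner slope beats the outer slope at $T_{r_1}$. This makes $z^+$ a weak supersolution, and one comparison on $M_\rho$ handles the inner region and the end together; an analogous glued $z^-$ gives the lower bound.

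\textbf{What differs in your version.} You split the problem into exact radial barriers $r^a\pm(A-Br^{-\sigma})$ on $M_\rho\backslash M_{r_*}$, plus a separate uniform bound for $v_\rho$ on $T_{r_*}$. Two features are cleaner than the paper's:
\begin{enumerate}[(i)]
\item Since $0<\sigma<2$, the correction $r^{-\sigma}$ dominates any $O(r^{-2})$ error, so you never need information about $\operatorname{tr} m$. The paper's $\lambda r^{-2}$ is of the same order as the error and must be tuned against $\inf \operatorname{tr} m$.
\item Your lower bound follows directly from $v_\rho\ge 0$, with no second glued barrier.
\end{enumerate}

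\textbf{Your compactness fallback.} This argument is complete and arguably the most transparent. The limit satisfies $0\le \tilde v\le 1$. It equals $1$ at some point of $M_{r_*}$, which must be an interior point of $M$ because $\tilde v=0$ on $\partial M$. The strong maximum principle then forces $\tilde v\equiv 1$, contradicting $\tilde v=0$ on $\partial M$. Its cost is that it is non-constructive and needs local $W^{2,p}$ estimates to pass to the limit.

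\textbf{Your explicit first option.} This is really the paper's mechanism in disguise, but as written it is missing one ingredient. A bounded profile such as $K(1-e^{-\lambda(r-r_0)})$ can never satisfy $h(\rho)\ge\rho^a$. So the minimum must switch to $\phi_+$ at a crossing where the inner profile overtakes $\phi_+$ from below. You can arrange this as follows:
\begin{enumerate}[(i)]
\item Extend the profile slightly past $r_*$, say to $r_*+1$; with $\lambda$ fixed by bounded geometry it remains a supersolution there.
\item Take $K$ large, so that the profile's increment on $[r_*,r_*+1]$ exceeds that of $\phi_+$.
\item Only then choose $A$ so that $\phi_+(r_*)$ lies just above $h(r_*)$.
\end{enumerate}
This large-slope-at-the-interface requirement is exactly what the constant $c_1$ and the Hopf lemma supply in the paper.
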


\begin{proof}
  Let $M_\rho = [r_0, \rho] \times \mathbb{T}^2$ for any $\rho>r_0$, $\partial_+ M_\rho =
  \{\rho\} \times \mathbb{T}^2$ and $\partial_- M_\rho = \{r_0 \} \times
  \mathbb{T}^2$. For any $\rho > r_0$, we denote $w_{\rho }$ the spacetime
  harmonic function with $w_\rho = 0$ on $\partial_- M_\rho$ and $w_\rho = 1$ on
  $\partial_+ M_\rho$. 
  
  Let $r_1\in(r_0,+\infty)$ be determined later. By the Hopf lemma,
  \begin{equation}
    \tfrac{\partial w_{r_1}}{\partial \nu_{r_1}} > 0 \text{ along } \partial_+
    M_{r_1} . \label{normal derivative wr1}
  \end{equation}
  Define
  \begin{equation}
    z^+ = \left\{\begin{array}{ll}
      c_1 w_{r_1} & \text{ on } M_{r_1},\\
      r^{\tfrac{2}{2 - \gamma}} + (c_1 - r_1^{\frac{2}{2-\gamma}} - \lambda r_1^{- 2}) + \lambda
      r^{- 2} & \text{ on } M\backslash M_{r_1} .
    \end{array}\right. \label{z plus}
  \end{equation}
  We show that $z^+$ is a super solution on $M\backslash M_{r_1}$ if $\lambda$
  and $r_1$ are chosen appropriately.
  
  By the asymptotics \eqref{asymptotics g} of $g$,
  \begin{equation}
    \det g = r^2 (1 + r^{- \kappa} \ensuremath{\operatorname{tr}}_g m + o
    (r^{- \kappa})), \label{full det}
  \end{equation}
  and
  \begin{equation}
    g^{r r} = r^2 (1 + o (r^{- \kappa})) . \label{inverse metric rr}
  \end{equation}
  So for all sufficiently large $r > r_1$,
\begin{align}
\Delta z^+ = & \tfrac{1}{\sqrt{\det g}} \partial_r (g^{r r} \sqrt{\det g} \partial_r
z^+) \\
= & \tfrac{2}{2 - \gamma} r^{\tfrac{2}{2 - \gamma}} (\kappa -
\tfrac{\kappa}{2} r^{- \kappa} \mbox{tr}_g m + o
(r^{- \kappa})) \label{laplace z}
\end{align}
  by a tedious calculation. Moreover,
  \[ | \nabla z^+ |^2 = g^{r r} (\partial_r z^+)^2 = r^{\tfrac{4}{2 - \gamma}}
     (\tfrac{4}{(2 - \gamma)^2} - 2 \lambda r^{- \kappa} + o (r^{- \kappa})),
  \]
  and hence
  \begin{equation}
    | \nabla z^+ | = r^{\tfrac{2}{2 - \gamma}} (\tfrac{2}{2 - \gamma} -
    \lambda \tfrac{2 - \gamma}{2} r^{- \kappa} + o (r^{- \kappa})) .
    \label{grad z length}
  \end{equation}
  It follows from \eqref{laplace z} and \eqref{grad z length} that
  \begin{equation}
  \Delta z^+ - \kappa | \nabla z^+ | = \kappa \tfrac{2 - \gamma}{2} r^{- 2}
     (\lambda - \tfrac{2}{(2 - \gamma)^2} \mbox{tr}_g m +
     o (1))< 0 \label{ss prop outside}
     \end{equation}
  on $M\backslash M_{r_1}$ if we choose $\lambda <
  \inf_{\mathbb{T}^2} \tfrac{2}{(2 - \gamma)^2}
  \mbox{tr}_g m$ and $r_1$ sufficiently large.
  
  Now we show that $z^+$ is a weak super solution on $M$ if $c_1$ is chosen
  appropriately. Indeed, first observe that by the choice of $r_1$ above,
  $z^+$ is a super solution on $M\backslash M_{r_1}$; on $M_{r_1}$, $z^+ =
  w_{r_1}$ is spacetime harmonic.
  
  With \eqref{normal derivative wr1}, we choose $c_1$ such that
  \begin{align}
      c_1 \tfrac{\partial w_{r_1}}{\partial \nu_{r_1}} \geq &  
     \tfrac{\partial}{\partial \nu_{r_1}} (r^{\tfrac{2}{2 - \gamma}} + (c_1 -
     r_1 - \lambda r_1^{- 2}) + \lambda r^{- 2})\\ = &r^{\tfrac{2}{2 - \gamma}}
     (\tfrac{2}{2 - \gamma} - 2 \lambda r^{- \kappa} + o (r^{- \kappa}))\label{bdry deri comparison}
  \end{align}
  holds along $T_{r_1}$. The following steps the lines are almost identical to
  {\cite[(6.17)-(6.21)]{alaee-positive-2022}}. For the sake of completeness, we present it here with our notation.
  
  It is easy to see that the spacetime harmonic function $v_\rho$ satisfies the following boundary condition
  \begin{equation}
      \left\{\begin{array}{ll}
           v_\rho=\rho^{\frac{2}{2-\gamma}} & \mbox{on}\;\partial_+ M_\rho \\
           v_\rho=0 & \mbox{on}\;\partial_-M_\rho
      \end{array}\right.
  \end{equation}

  From the definition of $z^+$ and \eqref{ss prop outside}, the function $z^+-v_\rho$ is a super solution for a linear elliptic equation with bounded coefficients as follows
  \begin{equation}
      \mathcal L(z^+- v_\rho):=\Delta(z^+- v_\rho)+\kappa\frac{\nabla(z^++ v_\rho)}{|\nabla z^+|+|\nabla  v_\rho|}\nabla(z^+- v_\rho)\leq 0
  \end{equation}
  holds both on $M_{r_1}$ and $M_\rho/M_{r_1}$. We denote $\overrightarrow{\mathcal G}=\kappa\frac{\nabla(z^++v_\rho)}{|\nabla z^+|+|\nabla v_\rho|}$ Then for any non-negative test function $\phi\in C_c^{\infty}(M_{r_1})$, we have 
  \begin{align}
      0\leq &-\int_{M_{r_1}}\phi\mathcal L(z^+-v_{r_1})dV\\
      = &\int_{M_{r_1}}(\nabla \phi\cdot\nabla(z^+-v_\rho)-\phi \overrightarrow{\mathcal G}\cdot\nabla(z^+-v_\rho))dV\\
      &-\int_{T_{r_1}}\phi\frac{\partial}{\partial\nu_{r_1}}(c_1\omega_{r_1}- v_\rho)dA\label{LHS week sub harmonic}
  \end{align}
  and also
  \begin{align}
      0\leq &-\int_{M_\rho\setminus M_{r_1}}\phi\mathcal L(z^+-v_{r_1})dV\\
      = &\int_{M_\rho\setminus M_{r_1}}(\nabla \phi\cdot\nabla(z^+-v_\rho)-\phi \overrightarrow{\mathcal G}\cdot\nabla(z^+-v_\rho))dV\\
      &+\int_{T_{r_1}}\phi\frac{\partial}{\partial\nu_{r_1}}(r^{\frac{2}{2-\gamma}}+\lambda r^{-2}-v_\rho)dA\label{RHS week subharmonic}
  \end{align}
  By summing both sides of inequalities \eqref{LHS week sub harmonic} and \eqref{RHS week subharmonic}, in the spirit of \eqref{bdry deri comparison}, we obtain
  \begin{align}
      &\int_{M_\rho}(\nabla \phi\cdot\nabla(z^+-v_\rho)-\phi \overrightarrow{\mathcal G}\cdot\nabla(z^+- v_\rho))dV\\\geq &\int_{T_{r_1}}\phi\frac{\partial}{\partial\nu_{r_1}}(c_1w_{r_1}-r^{\frac{2}{2-\gamma}}-\lambda r^{-2})dA\\
      \geq & 0\label{weak sup harmonic}
  \end{align}
  
  We derive from the weak maximum principle that
  \[ \inf_{M_{r_1}} (z^+ - v_{r_1}) \geq    \inf_{\partial M_{r_1}} (z^+
     - v_{r_1}) \geq    0. \]
  On the other hand, let $r_2>1$. The construction of a lower barrier $z^-$ is analogous, which is given by
  \begin{equation*}
    z^- = \left\{\begin{array}{ll}
      c_2 \tilde w_{r_2} & \text{ on } M_{r_2},\\
      r^{\tfrac{2}{2 - \gamma}} + (c_2 - r_2^{\frac{2}{2-\gamma}} - \chi r_2^{- 2}) + \chi
      r^{- 2} & \text{ on } M\backslash M_{r_2} .
    \end{array}\right.
  \end{equation*}
  where $\tilde\omega_{r_2}$ is a spacetime harmonic function satisfying that $\tilde\omega_{r_2}=-1$ on $\partial_+M_{r_2}$ and $\tilde\omega_{r_2}=0$ on $\partial_-M_{r_2}$, and similarly $\chi$ and $r_2$ are chosen such that $z^-$ is a sub solution for the spacetime harmonic equation. The same comparison argument proves that $v_\rho \geq z^-$ on $M_\rho$ for any $\rho>r_2$. Therefore, by choosing $r_*=\max\{r_0,r_1\}$, we have $z^-\leq v_\rho\leq z^+$ on $M\rho$ for any $\rho>r_*$.  Hence, the lemma is proved.
\end{proof}

\begin{lemma}\label{c1}
  There exists constants $C > 0$ and $r_{\ast} > 1$, such that for all $\rho >
  r_{\ast}$,
  \[ | \nabla v_{\rho} - \nabla r^{\tfrac{2}{2 - \gamma}} | \leq    C \text{
     on } M_{\rho} \backslash M_{r_{\ast}} . \]
\end{lemma}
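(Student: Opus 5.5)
The gradient bound will follow from the $C^0$ bound of Lemma \ref{apriori bound} combined with interior and boundary elliptic gradient estimates, applied on rescaled regions where the geometry is uniformly controlled. Set $\bar v = r^{\frac{2}{2-\gamma}}$ and $h_\rho = v_\rho - \bar v$. Lemma \ref{apriori bound} gives $|h_\rho|\le C$ on $M_\rho\setminus M_{r_\ast}$. The computation in the proof of Lemma \ref{apriori bound}, namely \eqref{laplace z} and \eqref{grad z length} with $\lambda=0$, shows that
\[
\Delta\bar v-\kappa|\nabla\bar v| = O(r^{\frac{2}{2-\gamma}-\kappa}) = O(r^{-2}),
\]
up to $o(r^{-2})$ terms. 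Subtracting the equation $\Delta v_\rho - \kappa|\nabla v_\rho|=0$ (recall $3f=-\kappa$) then gives a linear equation
\[
\Delta h_\rho + \vec{\mathcal G}\cdot\nabla h_\rho = F, \qquad |\vec{\mathcal G}|\le\kappa,\quad |F|=O(r^{-2}),
\]
exactly as in the comparison step of Lemma \ref{apriori bound}. Here $\vec{\mathcal G}=-\kappa\frac{\nabla(v_\rho+\bar v)}{|\nabla v_\rho|+|\nabla \bar v|}$, and all norms are taken with respect to $g$.

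The key step is to localize in regions of unit size. Because $g$ is asymptotic to the cusp $dt^2+e^{2t}g_{\mathbb T^2}$ with $t=\log r$, each slab $\{t\in(t_0-1,t_0+1)\}$ is covered by the universal cover $(t_0-1,t_0+1)\times\mathbb R^2$ with pulled-back metric $dt^2+e^{2t}(g_{\mathbb T^2}+\dots)$. After the affine rescaling $y=e^{t_0}x$ of the torus coordinates, this metric is uniformly $C^{1,\alpha}$ close to the Euclidean metric on a unit ball. The bounds $|\nabla Q_g|=o(r^{-\kappa-1})$ and $|\nabla^2 Q_g|=o(r^{-\kappa-2})$ from Definition \ref{ah non} are measured in $g$, so they remain bounded in these rescaled charts. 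In each such unit ball $B$ with $t_0\ge \log r_\ast+1$, the function $h_\rho$ solves a linear elliptic equation with uniformly bounded drift and right-hand side. Interior $W^{2,p}$ and Schauder-type estimates (for instance \cite[Theorem 8.32]{GT}-type $C^{1,\alpha}$ estimates) then give
\[
|\nabla h_\rho|_g \le C(\sup_{2B}|h_\rho| + \sup_{2B}|F|) \le C,
\]
with $C$ independent of $\rho$ and of the center of the ball. Since the scaling is an isometry onto its image in the $g$-metric, the gradient bound holds in $g$-norm.

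Near the outer boundary $T_\rho$, the same argument applies with boundary estimates. There $h_\rho = \rho^{\frac{2}{2-\gamma}} - \rho^{\frac{2}{2-\gamma}} = 0$, which is smooth and uniformly bounded in the rescaled charts, so the global $C^{1,\alpha}$ estimate up to a flat boundary piece gives the same bound. Near $T_{r_\ast}$ we stay at distance at least $1$ in $t$, so we either enlarge $r_\ast$ slightly or use the interior estimate on $M_\rho\setminus M_{r_\ast}$ after replacing $r_\ast$ by $e\,r_\ast$. The statement permits this, since it only asserts the existence of some $r_\ast$.

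The main obstacle is the nonlinearity: the drift $\vec{\mathcal G}$ is merely bounded and measurable, because it depends on $\nabla v_\rho$. Hence only $C^{1,\alpha}$ estimates from $W^{2,p}$ theory are available, not Schauder estimates with H\"older coefficients. This is enough for a gradient bound, but a small subtlety remains: points where $|\nabla v_\rho|+|\nabla\bar v|=0$. This cannot occur in the end because $|\nabla\bar v|\ge c r^{\frac{2}{2-\gamma}}>0$. Alternatively, one can bypass the drift entirely by treating $|\nabla v_\rho|$ directly as a Lipschitz nonlinearity in the quasilinear gradient estimate for $\Delta v - \kappa|\nabla v|=0$.
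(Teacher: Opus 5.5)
Your proposal is correct and follows essentially the same route as the paper. Like the paper, you write $h_\rho=v_\rho-r^{\frac{2}{2-\gamma}}$ as a solution of a linear equation with drift bounded by $\kappa$ and bounded right-hand side, combine the $C^0$ bound of Lemma~\ref{apriori bound} with uniform interior and boundary $W^{2,p}$ estimates (using $h_\rho=0$ on $T_\rho$), and conclude by Sobolev embedding. Your rescaled universal-cover slab charts just make explicit the uniformity of constants that the paper attributes to the asymptotically locally hyperbolic geometry (fixed-radius geodesic balls); in those charts the metric is uniformly comparable, with uniform $C^2$ bounds, to the $t_0$-independent hyperbolic model rather than ``close to Euclidean'', which is all the estimates require.
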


\begin{proof}
The proof is basically the same with {\cite[Lemma 6.1]{alaee-positive-2022}}
  with appropriate adjustments similar to those of Lemma \ref{apriori bound}.
  
Given $\rho>r_*$, let $h_\rho=v_\rho-r^{\frac{2}{2-\gamma}}$. Note that $h_\rho$ satisfies the following equation
\begin{equation}
    \Delta h_\rho+\kappa\frac{\nabla(v_\rho+r)}{|\nabla v_\rho|+|\nabla r^{\frac{2}{2-\gamma}}|}\nabla{h_\rho}=-\Delta r^\frac{2}{2-\gamma}-\kappa|\nabla r^{\frac{2}{2-\gamma}}|:=G.\label{equation for gradient estimate}
\end{equation}
It is straightforward to verify that the coefficients of the first-order terms in \eqref{equation for gradient estimate} are uniformly bounded. Fix $p_0\in\partial_+M_\rho$, and denote by $B_\epsilon$ (resp. $B_{\epsilon/2}$) the geodesic ball centered at $p_0$ of radius $\epsilon$ (resp. $\epsilon/2$). We fix $\epsilon>0$ so that it is smaller than the injectivity radius at every point $x\in M\setminus M_{r_*}$. For $1<p<\infty$, the boundary $L^p$-estimates, together with the condition $h_\rho=0$ on $\partial_1^+M_\rho$, then yield
\begin{equation}
    \|h_\rho\|_{W^{2,p}(B_{\epsilon/2}\cap M_\rho)}\leq C_0(\|G\|_{L^p(B_{\epsilon}\cap M_\rho)}+\|h_\rho\|_{L^p(B_\epsilon\cap M_\rho)})
\end{equation}
Since the metric is asymptotically locally hyperbolic, the constant $C_0$ is uniform over all $x_0\in\partial_1^+M_\rho$ and all $\rho>r_*$. On the other hand, a direct calculation gives
\begin{equation}
    \Delta r^{\frac{2}{2-\gamma}}+\kappa|\nabla r^{\frac{2}{2-\gamma}}|=- \tfrac{\kappa}{(2 - \gamma)} \mbox{tr}_g m r^{-2} + o (1).
\end{equation}
Therefore, $G$ is uniformly bounded. Together with the last lemma, this implies that $h_\rho$ is also uniformly bounded on $M_\rho\setminus M_1$, independently of $\rho$. By choosing $p>3$ and applying the Sobolev embedding theorem, there exists a uniform constant $C$ such that
\begin{equation}
    \|h_\rho\|_{C^{1,1-\frac{3}{p}}(B_{\epsilon/2}\cap M_\rho)}\leq C_1\|v_\rho\|_{W^{2,p}(B_{\epsilon/2}\cap M_\rho)}\leq C.
\end{equation}
Interior $L^p$-estimates can be used to obtain the same conclusion for balls away from the boundary. The desired result follows.
\end{proof}


We derive the asymptotics of the spectral mean curvature in the following lemma. 

\begin{lemma} \label{spec mc decay} The spectral mean curvature of the coordinate torus \( T_r \) satisfies the following
\begin{equation} H + \gamma u^{- 1} u_{\nu} = (2 + \tfrac{\gamma}{2 - \gamma}) - \kappa
   (\tfrac{1}{2} \ensuremath{\operatorname{tr}}_{g_{\mathbb{T}^{2}}} m + \gamma \zeta) r^{-
   \kappa} + o (r^{- \kappa}) . \end{equation}

  \end{lemma}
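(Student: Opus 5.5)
This is a direct asymptotic computation in the coordinates of Definition \ref{ah non}. We compute $H$ and $\gamma u^{-1}u_\nu$ separately for the torus $T_r=\{r\}\times\mathbb T^2$, with the outward normal pointing towards increasing $r$, and add them. Throughout, $Q_g$ and its derivatives are absorbed into $o(r^{-\kappa})$ errors using $|Q_g|+r|\nabla Q_g|+r^2|\nabla^2Q_g|=o(r^{-\kappa})$.

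\textbf{Mean curvature.} Set $\sigma_r=r^2(g_{\mathbb T^2}+r^{-\kappa}m)$ for the leading part of the induced metric. The unit normal is $\nu=\sqrt{g^{rr}}\,\partial_r$ with $\sqrt{g^{rr}}=r(1+o(r^{-\kappa}))$. Since $T_r$ is a level set of $r$, the mean curvature is
\[
H=\tfrac12\sqrt{g^{rr}}\,\partial_r\log\det\sigma_r+o(r^{-\kappa}).
\]
Expanding the determinant gives
\[
\log\det\sigma_r=4\log r+\log\det g_{\mathbb T^2}+r^{-\kappa}\operatorname{tr}_{g_{\mathbb T^2}}m+O(r^{-2\kappa}),
\]
so
\[
\partial_r\log\det\sigma_r=\tfrac4r-\kappa r^{-\kappa-1}\operatorname{tr}_{g_{\mathbb T^2}}m+o(r^{-\kappa-1}).
\]
Multiplying by $\tfrac12 r$ yields
\[
H=2-\tfrac{\kappa}{2}r^{-\kappa}\operatorname{tr}_{g_{\mathbb T^2}}m+o(r^{-\kappa}).
\]
The off-diagonal $dr\,dx^i$ components and the $rr$ perturbation lie in $Q_g$. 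Their contribution to $H$ involves $r\,\partial_r Q_g$ and $Q_g$ rescaled by the metric, and the main care of the proof is in checking that these terms are really $o(r^{-\kappa})$ in the orthonormal frame. This is where the weighted derivative bounds on $Q_g$ are used. It also matches the expansion of $\det g$ in \eqref{full det} used in Lemma \ref{apriori bound}.

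\textbf{Spectral term.} From \eqref{u asymptotics},
\[
\log u=\tfrac1{2-\gamma}\log r+r^{-\kappa}\zeta+o(r^{-\kappa}).
\]
We assume the $o(r^{-\kappa})$ remainder is differentiable with $r\partial_r(\text{remainder})=o(r^{-\kappa})$, which is the natural reading of $u\in C^2$ with these asymptotics. Then
\[
u^{-1}u_\nu=r\,\partial_r\log u\,(1+o(r^{-\kappa}))=\tfrac1{2-\gamma}-\kappa r^{-\kappa}\zeta+o(r^{-\kappa}).
\]

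\textbf{Summing.} Adding $\gamma$ times the spectral term to $H$ gives
\[
H+\gamma u^{-1}u_\nu=2+\tfrac{\gamma}{2-\gamma}-\kappa\bigl(\tfrac12\operatorname{tr}_{g_{\mathbb T^2}}m+\gamma\zeta\bigr)r^{-\kappa}+o(r^{-\kappa}),
\]
which is the claim. The only real difficulty is the bookkeeping of the $Q_g$ error terms, in particular the off-diagonal metric terms and the decay of derivatives of the remainder in $u$. The leading-order computation itself is routine.
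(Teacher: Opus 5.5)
Your proposal is correct and follows essentially the same direct coordinate computation as the paper. The paper writes $H=-\sigma^{ij}(g^{rr})^{-1/2}\Gamma^r_{ij}$ with $\Gamma^r_{ij}\approx-\tfrac12 g^{rr}\partial_r g_{ij}$ and inverts $\sigma$ explicitly, which is your $\tfrac12\sqrt{g^{rr}}\,\sigma^{ij}\partial_r\sigma_{ij}=\tfrac12\sqrt{g^{rr}}\,\partial_r\log\det\sigma$ written without Jacobi's formula; the $u^{-1}u_\nu$ expansion is the same, and the paper also tacitly assumes the derivatives of the $o(r^{-\kappa})$ remainders and $Q_g$ stay $o(r^{-\kappa})$ after the $r$-weighting, as you do.
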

 
\begin{proof}
The asymptotics \eqref{asymptotics g} is written in terms of components of $g$
as $g_{r r} = r^{- 2} (1 + o (r^{- \kappa}))$, $g_{r i} = o (r^{- \kappa})$
and $g_{i j} = r^2 (\delta_{i j} + r^{- \kappa} m_{i j} + o (r^{- \kappa}))$.
Then $g^{r r} = r^2 (1 + o (r^{- \kappa}))$, $g^{r i} = o (r^{- \kappa})$.
%
The unit normal $\nu$ of $r$-level set $T_r$ is given by
\begin{equation}
  \nu = (g^{r r})^{- \tfrac{1}{2}} g^{r \alpha} \partial_{\alpha}
  \label{normal}
\end{equation}
where $\alpha$ ranges from $1$ to $3$. The mean curvature $H$ of $T_r$ is
\begin{equation}
  H = \sigma^{i j} \langle \nu, \nabla_{\partial_i} \partial_j \rangle = -
  \sigma^{i j} (g^{r r})^{- \tfrac{1}{2}} \Gamma_{i j}^r, \label{mean
  curvature}
\end{equation}
where $\sigma_{i j} = g_{i j}$ is the the induced metric on $T_r$ and
$\sigma^{i j}$ is its inverse.

We compute componentwise and write the quantities in the form of an asymptotic expansion.

First of all, 

\begin{equation}
    (g^{r r})^{- \tfrac{1}{2}} = r^{- 1} (1 + o (r^{- \kappa})).\label{grr}
\end{equation}

Secondly, in terms of $\sigma$, we have
\begin{equation}
    \det \sigma = r^4 (1 + r^{- \kappa} \mbox{tr}_{g_{\mathbb{T}^{2}}}m
+ o (r^{- \kappa})),\label{det}
\end{equation}
and

\begin{equation}
    \sigma^{- 1} = (\det \sigma)^{- 1} r^2 \left(\begin{array}{cc}
     1 + r^{- \kappa} m_{22} + o (r^{- \kappa}) & - r^{- \kappa} m_{12} + o
     (r^{- \kappa})\\
     - r^{- \kappa} m_{12} + o (r^{- \kappa}) & 1 + r^{- \kappa} m_{11} + o
     (r^{- \kappa})
   \end{array}\right).\label{sigmainverse}
\end{equation}

Set $\hat{m} = \left(\begin{array}{cc}
  m_{22} & - m_{1 2}\\
  - m_{1 2} & m_{11}
\end{array}\right)$,
we have 
\begin{align}
\sigma^{i j} = & (\det \sigma)^{- 1} r^2 (\delta_{i j} + r^{- \kappa}
\hat{m}_{i j} + o (r^{- \kappa})) \\
= & r^{- 2} (1 - r^{- \kappa} \mbox{tr}_{g_{\mathbb{T}^{2}}} m + o (r^{-
\kappa})) (\delta_{i j} + r^{- \kappa} \hat{m}_{i j} + o (r^{- \kappa} ))
.\label{sigmainverse2}
\end{align}
Then we compute the Christoffel symbols of \(g \)
\begin{align}
\Gamma_{i j}^r = & \tfrac{1}{2} g^{r r} (g_{i r, j} + g_{j r, i} - g_{i j, r}) +
\tfrac{1}{2} g^{r k} (g_{i k, j} + g_{j k, i} - g_{i j, k}) \\
= & - \tfrac{1}{2} g^{r r} g_{i j, r} + o (r^{2 - 2 \kappa}) \\
= & - \tfrac{1}{2} r^2 (1 + o (r^{- \kappa})) \tfrac{\partial}{\partial r}
(r^2 (\delta_{i j} + r^{- \kappa} m_{i j} + o (r^{- \kappa}))) + o (r^{2 - 2
\kappa}) \\
= & - r^3 \delta_{i j} - \tfrac{1}{2} (2 - \kappa) r^{- \kappa + 3} m_{i j}
+ o (r^{- \kappa + 3}) .\label{chsymbol}
\end{align}
By Combining \eqref{det}, \eqref{sigmainverse}, \eqref{sigmainverse2}, \eqref{chsymbol} and \eqref{mean curvature}, we obtain
\begin{align}
H = & - \sigma^{i j} (g^{r r})^{- \tfrac{1}{2}} \Gamma_{i j}^r \\
= & (1 - r^{- \kappa} \mbox{tr}_{g_{\mathbb{T}^{2}}} m+ o (r^{- \kappa}))
(\delta_{i j} + r^{- \kappa} \hat{m}_{i j} {+ o (r^{- \kappa}} ))
\\
& \quad (1 + o (r^{- \kappa})) (- \delta_{i j} - \tfrac{1}{2} (2 - \kappa)
r^{- \kappa} m_{i j} + o (r^{- \kappa})) \\
= & 2 - \tfrac{1}{2} \kappa r^{- \kappa} \mbox{tr}_{g_{\mathbb{T}^{2}}} m
+ o (r^{- \kappa}).\label{mean curvature2}
\end{align}
Next we evaluate the term $u^{- 1} u_{\nu}$. By \eqref{normal}, we have
\[ \nu = r (1 + o (r^{- \kappa})) \partial_r + \sum_i o (r^{- \kappa - 1})
   \partial_i . \]
Therefore, 
\begin{align}
u^{- 1} u_{\nu} & = r^{- \tfrac{1}{2 - \gamma}} (1 + r^{- \kappa} \zeta + o
(r^{- \kappa}))^{- 1} \\
& \quad (r (1 + o (r^{- \kappa})) \partial_r + \sum_i o (r^{- \kappa - 1})
\partial_i) (r^{\tfrac{1}{2 - \gamma}} (1 + r^{- \kappa} \zeta + o (r^{-
\kappa}))) \\
& = r^{- \tfrac{1}{2 - \gamma}} (1 - r^{- \kappa} \zeta + o (r^{- \kappa}))
r \partial_r (r^{\tfrac{1}{2 - \gamma}} (1 + r^{- \kappa} \zeta)) + o (r^{-
\kappa}) \\
& = \tfrac{1}{2 - \gamma} - \kappa \zeta r^{- \kappa} + o (r^{- \kappa}) .
\end{align}
Hence the spectral mean curvature $H + \gamma u^{- 1} u_{\nu}$ is given by
\[ H + \gamma u^{- 1} u_{\nu} = (2 + \tfrac{\gamma}{2 - \gamma}) - \kappa
   (\tfrac{1}{2} \ensuremath{\operatorname{tr}}_{g_{\mathbb{T}^{2}}} m + \gamma \zeta) r^{-
   \kappa} + o (r^{- \kappa}) . \]
This completes the proof of the lemma.
\end{proof}

With the PDE estimates and the asymptotics of the spectral mean curvature, we can finally prove our main result Theorem \ref{spec pmt}.

\begin{proof}[Proof of Theorem \ref{spec pmt}]
  We apply Proposition \ref{spectral hkk f} to \( v_\rho \), and we obtain the following integral inequality
\begin{align}
& 2 \int_{\partial_- M_\rho} | \nabla v_\rho| \left( - { \tfrac{4 -
\gamma}{2 - \gamma}} - (H_{\partial_- M_{r}} + \gamma u^{- 1} u_{\nu_-})
\right) \label{pmt l1}  \\
& \qquad - 2 \int_{\partial_+ M_\rho} | \nabla v_\rho| \left( -
{ \tfrac{4 - \gamma}{2 - \gamma}} + (H_{\partial_+ M_\rho} + \gamma u^{- 1}
u_{\nu_+}) \right) \label{pmt l2} \\
\geq    & (6 - 2 \gamma) \int_{M_\rho} \left| \nabla | \nabla v_\rho|^{\tfrac{1}{2}}
- {\tfrac{1}{ 2 - \gamma}} | \nabla v_{r}|^{-
\tfrac{1}{2}} \nabla v_\rho \right|^2 - \int_{c_-}^{c_+} 4 \pi \chi (\Sigma_t)
\mathrm{d} t \label{pmt l3}\\
& \quad + \int_{M_\rho} 2 \left(- \gamma u^{- 1} \Delta_g u + \tfrac{1}{2} R_g +\tfrac{(3-\gamma)(4-\gamma)}{(2-\gamma)^2}
  \right) | \nabla v_\rho| \label{pmt l4}.
\end{align}
By the assumptions of the theorem,
\begin{equation} \label{bdry integral}
 \qquad - 2 \int_{\partial_+ M_\rho} | \nabla v_\rho| \left( -
{ \tfrac{4 - \gamma}{2 - \gamma}} + (H_{\partial_+ M_\rho} + \gamma u^{- 1}
u_{\nu_+}) \right) \geq    0.
\end{equation}

To show the spectral positive mass theorem, it suffices to show that the above converges to the spectral mass \( E= E(M,g,\gamma,u) \) as \( \rho\to \infty \).

Using Lemma \ref{apriori bound}, \( v_\rho \) is locally uniformly bounded. Standard elliptic estimates yield locally uniform \( C^{2,\alpha} \) bounds for \( v_\rho \) for any \( \alpha \in (0,1) \). Then we can use the Arzelà-Ascoli lemma to extract a convergent subsequence \( v_{\rho_i} \). We denote the limit by \( v \).

It follows from Lemma \ref{c1} that
\[
  |\nabla v_\rho | = |\nabla r ^{\tfrac{2}{2-\gamma}} |\big|_{r=\rho} + O(1) = \tfrac{2}{2-\gamma} \rho^{\tfrac{2}{2-\gamma}} + O(1).
\]
Note that \( T_\rho = \partial_+ M_\rho \). Hence the boundary integral \eqref{bdry integral} is
\begin{align}
 0 \leq    & - 2 \int_{\partial_+ M_\rho} | \nabla v_\rho | \left( - \tfrac{4 - \gamma}{2 -
  \gamma} + (H_{\partial_+ M_\rho} + \gamma u^{- 1} u_{\nu_+}) \right)
  \\
  = & 2 \int_{\mathbb{T}^2} (\tfrac{2}{2 - \gamma} \rho^{\tfrac{2}{2 - \gamma}} +
  O (1)) \kappa (\tfrac{1}{2}
  \ensuremath{\operatorname{tr}}_{g_{\mathbb{T}^2}} m + \gamma \zeta) \rho^{-
  \kappa} (\rho^2 + o (\rho^2)) \sqrt{g_{\mathbb{T}^2}} \\
  = & \tfrac{4}{2 - \gamma} \kappa \int_{\mathbb{T}^2} (\tfrac{1}{2}
  \ensuremath{\operatorname{tr}}_{g_{\mathbb{T}^2}} m + \gamma \zeta) \sqrt{g_{\mathbb{T}^{2}}} + o (1) = E+o(1) \label{mass convergence}
  , 
\end{align}
where we have used Lemma \ref{spec mc decay}.
Hence, we have shown the spectral positive mass theorem.

It remains to show the case of vanishing spectral mass.
To this end, we need to take limits of both sides of the inequality in the lines \eqref{pmt l1}-\eqref{pmt l4}.
We only have to deal with the convergence of
\[
  \int_{M_{\rho_i}} \left| \nabla | \nabla v_{\rho_{i}}|^{\tfrac{1}{2}}
- {\tfrac{1}{ 2 - \gamma}} | \nabla v_{r_{i}}|^{-
  \tfrac{1}{2}} \nabla v_{\rho_{i}} \right|^2,
\]
since the convergence of other terms follows simply from \( C^{2,\alpha} \) convergence. The issue is due to the points where \( \nabla v_{r_{i}} \)
vanishes.

To remedy this, we fix a compact subset \( \Omega \)
of \( M \), and define
\[ \Omega_{\epsilon} = \{ x\in \Omega:\text{ } |\nabla v| \geq    \epsilon \} \]
The convergence
\[
\left| \nabla | \nabla v_{r_{i}}|^{\tfrac{1}{2}}
- {\tfrac{1}{ 2 - \gamma}} | \nabla v_{r_{i}}|^{-
  \tfrac{1}{2}} \nabla v_{r_{i}} \right|^2
\to
\left| \nabla | \nabla v|^{\tfrac{1}{2}}
- {\tfrac{1}{ 2 - \gamma}} | \nabla v|^{-
  \tfrac{1}{2}} \nabla v \right|^2 \text{ on }\Omega_{\epsilon}
\]
follows again from the \( C^{2,\alpha} \) convergence of \( v_{\rho_i}  \) to \( v \).
And
it follows from Fatou's lemma that
\begin{align}
  & \liminf_{i \to \infty} \int_{\Omega} | \nabla | \nabla v_{\rho_i}
  |^{\tfrac{1}{2}} - \tfrac{1}{2 - \gamma} | \nabla v_{\rho_i} |^{- \tfrac{1}{2}}
  \nabla v_{\rho_i} |^2 \\
  \geq    & \liminf_{i \to \infty} \int_{\Omega_{\varepsilon}} | \nabla |
  \nabla v_{\rho_i} |^{\tfrac{1}{2}} - \tfrac{1}{2 - \gamma} | \nabla v_{r_i}
  |^{- \tfrac{1}{2}} \nabla v_{\rho_i} |^2 \\
  \geq    & \int_{\Omega_{\varepsilon}} | \nabla |
  \nabla v|^{\tfrac{1}{2}} - \tfrac{1}{2 - \gamma} | \nabla v|^{-
  \tfrac{1}{2}} \nabla v |^2 . 
\end{align}
Hence by letting $\epsilon\rightarrow 0$,
\begin{align}
  & \liminf_{i \to \infty} \int_{\Omega} | \nabla | \nabla v_{\rho_i}
  |^{\tfrac{1}{2}} - \tfrac{1}{2 - \gamma} | \nabla v_{\rho_i} |^{- \tfrac{1}{2}}
  \nabla v_{\rho_i} |^2 \\
  \geq    &  \int_{\Omega} | \nabla |
  \nabla v|^{\tfrac{1}{2}} - \tfrac{1}{2 - \gamma} | \nabla v|^{-
  \tfrac{1}{2}} \nabla v |^2 . 
\end{align}
Considering the above, the convergence \eqref{mass convergence} and \( \chi(\Sigma_t) = 0 \)
in the lines \eqref{pmt l1}-\eqref{pmt l4},
we obtain the positive mass inequality,
\begin{align}
E
\geq    & (6 - 2 \gamma) \int_{M} \left| \nabla | \nabla v|^{\tfrac{1}{2}}
- {\tfrac{1}{ 2 - \gamma}} | \nabla v|^{-
\tfrac{1}{2}} \nabla v \right|^2 \mathrm{d} t \\
& \quad + \int_{M} 2 (- \gamma u^{- 1} \Delta_g u + \tfrac{1}{2} R_g +\tfrac{(3-\gamma)(4-\gamma)}{(2-\gamma)^2}
  ) | \nabla v| \\
  & \quad + 2 \int_{\partial_- M} | \nabla v| \left(  { \tfrac{4 -
\gamma}{2 - \gamma}} + (H_{\partial_- M} + \gamma u^{- 1} u_{\nu_-})
\right) .
\end{align}
Vanishing mass then implies that
\[  \nabla | \nabla v|^{\tfrac{1}{2}}
- {\tfrac{1}{ 2 - \gamma}} | \nabla v|^{-
  \tfrac{1}{2}} \nabla v = 0 \]
holds almost everywhere. The rest of the proof is similar to that of
Theorem \ref{spectral acg} (see Remark \ref{same metric}). Therefore, if the mass $m=0$, $M$ is isometric to a hyperbolic cusp.
\end{proof}

\bibliographystyle{alpha}
\bibliography{cusp-pmt}

\end{document}